\documentclass[11pt,reqno,a4paper,preprint]{elsarticle}
\usepackage{amsmath,amssymb,amsthm}
\usepackage{mathtools}
\usepackage{enumitem}
\usepackage[margin=2.5cm,top=4cm,bottom=4cm,footskip=1cm,headsep=2cm]{geometry}
\usepackage[utf8]{inputenc}
\usepackage{float}
\usepackage{amsthm}
\usepackage[british]{babel}
\usepackage{color}

\title{On existence, uniqueness and stability of solutions to Cahn-Hilliard/Allen-Cahn systems with cross-kinetic coupling}

\def\div{\operatorname{div}}

\def\NN{\mathbb{N}}
\def\RR{\mathbb{R}}

\def\LL{\mathbf{L}}
\def\Lh{\widehat{\LL}}
\def\spn{\operatorname{span}}

\def\la{\langle}
\def\ra{\rangle}

\DeclarePairedDelimiter{\norm}{\|}{\|}
\DeclarePairedDelimiter{\snorm}{|}{|}
\newcommand{\na}{\nabla}
\def\E{\mathcal{E}}
\def\D{\mathcal{D}}

\def\na{\nabla}

\newtheorem{lemma}{Lemma}
\newtheorem{theorem}[lemma]{Theorem}
\theoremstyle{definition}
\newtheorem{definition}[lemma]{Definition}

\newtheorem{example}[lemma]{Example}

\def\dt{\partial_t}

\def\ddt{\frac{d}{dt}}

\usepackage{xcolor}

\begin{document}
\author[1]{Aaron Brunk\corref{cor1}}
\cortext[cor1]{Corresponding author}
\ead{abrunk@uni-mainz.de}
\author[2,3]{Herbert Egger}
\ead{herbert.egger@jku.at}
\author[4]{T. D. Oyedeji}
\ead{timileyin.oyedeji@tu-darmstadt.de}
\author[4]{Y. Yang}
\ead{yangyiwei.yang@mfm.tu-darmstadt.de}
\author[4]{B.-X. Xu}
\ead{xu@mfm.tu-darmstadt.de}
\affiliation[1]{organization={Institute of Mathematics, Johannes Gutenberg-University},
city={Mainz},
country={Germany}}
\affiliation[2]{organization={Johann Radon Institute for Computational and Applied Mathematics},city={Linz},country={Austria}}
\affiliation[3]{organization={Institute for Numerical Mathematics, Johannes Kepler University},city={Linz},country={Austria}}
\affiliation[4]{organization={Mechanics of Functional Materials Division, Technical University},city={Darmstadt},country={Germany}}

\begin{abstract}
A system of phase-field equations with strong-coupling through state and gradient dependent non-diagonal mobility matrices is studied. Existence of weak solutions is established by the Galerkin approximation and a-priori estimates in strong norms. Relative energy estimates are used to derive a general nonlinear stability estimate. As a consequence, a weak-strong uniqueness principle is obtained and stability with respect to model parameters is investigated. 
\end{abstract}

\begingroup
\def\uppercasenonmath#1{} 
\let\MakeUppercase\relax 
\maketitle

\vspace*{-1em}

\begin{quote} 
\noindent 
{\small {\bf Keywords:} 
phase-field equations, Cahn-Hilliard/Allen-Cahn systems, Galerkin approximation, relative energy estimates, weak-strong uniqueness principle}
\end{quote}
\begin{quote}
{\small {\bf AMS-classification (2000):}
35B30   
35K61, 
35A01, 
35B35, 
35Q92  
}
\end{quote}

\vspace*{1em}

\endgroup


\section{Introduction}
\label{sec:intro}

Coupled systems of phase-field equations involving conserved and non-conserved quantities have been used by Cahn and Novick-Cohen in \cite{cahn1994} for modelling simultaneous phase separation and ordering in binary alloys. 
%
%
Similar models have been applied recently for modelling phase transformations in solid-state sintering \cite{Oyedeji2022} and, more generally, in the context of grain boundary segregation \cite{ABDELJAWAD2017528}. 
In this work we study a system with \emph{cross-kinetic coupling} of the form 
\begin{alignat}{2}
\dt\rho &= \div(\LL_{11}\nabla\mu_\rho + \LL_{12}\mu_\eta), \qquad & \mu_\rho &= -\gamma_\rho\Delta\rho + 
\partial_\rho f,
\label{eq:s1}\\
\dt\eta &= - \LL_{21} \nabla\mu_\rho - \LL_{22}\mu_\eta, \qquad & \mu_\eta &= -\gamma_\eta\Delta\eta + 
\partial_\eta f.
\label{eq:s2}
\end{alignat}
Here $\rho$, $\eta$ are the conserved and non-conserved phase-field variable, respectively. $\mu_\rho$, $\mu_\eta$ are corresponding generalized chemical potentials, $f=f(\rho,\eta)$ is an internal energy density, whose minima characterizes the thermodynamically favorable states of the systems. Furthermore, $\gamma_\rho$, $\gamma_\eta$ are the gradient parameters, and $\LL_{ij}$ are generalized mobilities or diffusivities. 
Under some general assumptions on the latter, the system describes a relaxation phenomenon accompanied by decay of the free energy 
\begin{equation}
\mathcal{E}(\rho,\eta) 
= \int_\Omega \tfrac{\gamma_\rho}{2}\snorm{\nabla\rho}^2  + \tfrac{\gamma_\eta}{2}\snorm{\nabla\eta}^2 + f(\rho,\eta) \, dx.    \label{eq:sE}
\end{equation}
Our motivation for studying systems with non-diagonal mobilities stems from asymptotic considerations for related models for phase separation in binary alloys 
\cite{Boussinot2013,Brener2012,Deng2012APF}, 
in which the presence of cross-kinetic coupling was shown essential to avoid spurious trapping effects \cite{BolladaEtAl2018} and to obtain quantitative agreement with the corresponding sharp interface limits. 
%

%
\subsection*{Related results.}
The Cahn-Hilliard/Allen-Cahn system with diagonal mobilities introduced in \cite{cahn1994} has been studied intensively in the literature. 
For constant diagonal mobility matrices and polynomial potential existence of unique global-in-time weak solutions and global attractors are proven by Brochet at al. in \cite{BROCHET199483}. In one space dimension existence with degenerate mobilities and logarithmic potential is discussed by Dal~Passo et al. in \cite{DalPasso1999}. Recently, Miranville and colleagues proved existence of unique global solutions with singular potentials and constant mobilities \cite{Miranville2019}. 
A model involving the equations of elasticity has been considered by Blesgen and Schlömerkemper~\cite{Blesgen2013}. 
The sharp interface limit has been studied  using inner and outer expansions in \cite{Nurnberg,cahn1996limiting,NOVICKCOHEN20001}. 
Some numerical investigations can be found in \cite{Yang,Huang,Xia} for the case of constant mobilities and in Barrett and Blowey~\cite{Barrett} for the case of degenerate mobilities. 
%

%
Coupled systems of multiple Cahn-Hilliard or Allen-Cahn equations with non-diagonal mobility matrices where also investigated in the literature.
Elliott et al. \cite{elliott1991} considered multi-component Cahn-Hilliard systems with constant mobility matrix and logarithmic potential and showed existence of a unique weak solution. These results were extended by Garcke et al. \cite{garcke1999} to the case of degenerated mobility matrices. 
Recently, Ehrlacher and co-workes \cite{EHRLACHER} extended the results to positive-semi definite gradient coupling. 
For coupled systems of Allen-Cahn equations, Harris et al. \cite{Harris} proved existence of a unique weak solution with constant mobility and polynomial type potential. 

\subsection*{Challenges and contributions.}
The consideration of kinetic cross-coupling in the Cahn-Hilliard/Allen-Cahn systems has some peculiarities: 
In principle, the system falls into the class of cross-diffusion systems, see e.g. \cite{Jungel2016}, but standard arguments for their analysis do not apply directly for various reasons. 
First, the system involves a second and a fourth order parabolic equation, and the cross-coupling terms involve gradients and scalar quantities. 
Second, no maximum principle for the Allen-Cahn component is available due to the cross-coupling terms. 
Furthermore, a third order derivative of the Allen-Cahn component appears in the Cahn-Hilliard equation, which implies a strong coupling of the equations. 
In order to avoid working with higher order derivatives, we use auxiliary variables $\mu_\rho$, $\mu_\eta$ corresponding to the variational derivative of the energy functional with respect to the phase-field variables $\rho$ and $\eta$, respectively.

The main focus of the current manuscript is a detailed analysis of the model \eqref{eq:s1}--\eqref{eq:s2}. 
We will establish existence of a dissipative weak solution under rather general conditions on the model parameters, in particular allowing the components of the mobility matrix to depend on the phase-field variables as well as their gradients. 
In addition, we present a nonlinear stability analysis allowing us to prove a weak-strong uniqueness result and stability w.r.t. perturbations in the model parameters.  
A main ingredient for the latter results are \emph{relative energy estimates}, which are a well-known tool for the analysis of hyperbolic conservation laws; see \cite{Dafermos.1979b,Dafermos.1979}. More recently, they were applied in the context of fluid dynamics for compressible Navier-Stokes equations and related systems; see e.g. \cite{Emmrich.2018,Feireisl.2018,Feireisl.2019,Giesselmann.2017,Hosek.2019}. 
In \cite{Brunkphd,brunkp}, we utilized relative energy estimates to provide a thorough analysis for the Cahn-Hilliard equation with concentration dependent mobilities and more general models for spinodal decomposition and viscoelastic phase separation. 
\subsection*{Outline.}
In Section \ref{sec:prelim}, we introduce our notation, main assumptions, and collect some preliminary results. 
In Section~\ref{sec:main}, we introduce our main results. Their proofs are presented in Sections~\ref{sec:existence}--\ref{sec:perturbation}. 
%
%
For illustration, some numerical tests are presented in Section~\ref{sec:num}.

\section{Notation and preliminaries}
\label{sec:prelim}

The system \eqref{eq:s1}--\eqref{eq:s2} will be investigated on a finite time interval $(0,T)$. 
To avoid the introduction of boundary conditions, we consider a spatially periodic setting, i.e., 
\begin{itemize}
\item[(A0)] $\Omega \subset \RR^d$, $d=2,3$ is a square/cube and identified with the $d$-dimensional torus $\mathcal{T}^d$.\\
Functions on $\Omega$ are assumed to be periodic throughout the paper. 
\end{itemize}
With minor changes, all results derived in the paper carry over to sufficiently regular bounded domains and more general boundary conditions.
By $L^p(\Omega)$, $W^{k,p}(\Omega)$, we denote the Lebesgue and Sobolev spaces of periodic functions with norms $\norm{\cdot}_{0,p}$ and $\norm{\cdot}_{k,p}$.
We abbreviate $H^k(\Omega)=W^{k,2}(\Omega)$ and $\norm{\cdot}_{k} = \norm{\cdot}_{k,2}$. 
The corresponding dual spaces are denoted by $H^{-k}(\Omega)=H^k(\Omega)'$, with the dual norms defined by
\begin{align} \label{eq:dualnorm}
    \norm{r}_{-k} = \sup_{v \in H^k(\Omega)} \frac{\la r, v\ra}{\|v\|_{k}}.
\end{align}
The symbol $\langle \cdot, \cdot\rangle$ denotes the duality product on $H^{-k}(\Omega) \times H^k(\Omega)$, and the same symbol is also used for the scalar product on $L^2(\Omega)$, which is defined by
\begin{align*}
\la u, v \ra = \int_\Omega u \cdot v \, dx \qquad \forall u,v \in L^2(\Omega).    
\end{align*}
By $L^2_0(\Omega) \subset L^2(\Omega)$, we denote the space of square integrable functions with zero average.
We write $L^p(a,b;X)$, $W^{k,p}(a,b;X)$, and
$H^k(a,b;X)$ for the Bochner spaces of integrable or differentiable functions on the time interval $(a,b)$ with values in a Banach space $X$. The corresponding norms are denoted by $\|\cdot\|_{L^p(X)}$,  $\|\cdot\|_{H^k(X)}$, etc. 
$C([a,b];X)$ denotes the corresponding space of continuous functions of time with values in $X$.

\subsection*{Energy dissipation}

We now give an informal derivation of an energy dissipation identity, which motivates the basic assumptions on our model parameters stated below. 
As a starting point, let us note that $\mu_\rho$, $\mu_\eta$ correspond to the variational derivatives of the energy functional \eqref{eq:sE}.
By formal differentiation of the energy along a spatially periodic solution of \eqref{eq:s1}--\eqref{eq:s2}, we then see that
\begin{align}\label{eq:defnD}
\tfrac{d}{dt} \mathcal{E}(\rho,\eta) 
&= \la \dt\rho ,\mu_\rho \ra + \la \dt\eta ,\mu_\eta \ra \\
&= - \la \LL_{11}\nabla\mu_\rho + \LL_{12} \mu_\eta,\nabla \mu_\rho \ra -  \la \LL_{21} \nabla\mu_\rho + \LL_{22} \mu_\eta, \mu_\eta\ra \notag \\
&= - \int_\Omega \begin{pmatrix} \nabla\mu_\rho \\ \mu_\eta\end{pmatrix}^\top \cdot\LL \cdot\begin{pmatrix} \nabla\mu_\rho \notag\\ \mu_\eta\end{pmatrix} =: -\D_{\rho,\eta}(\mu_\rho,\mu_\eta).
\end{align}
For the second step, we used the model equations and integration-by-parts. 
Let us note that the mobility matrix $\LL$, and hence also the dissipation functional $\D$, in general may depend on the phase-field variables $\rho$, $\eta$ and their gradients, which is indicated by the subscripts. 
In order to ensure energy dissipation, and hence thermodynamic consistency of the model, one should require that the matrix $\LL$ is positive (semi-)definite. 

\subsection*{Main assumptions}
To prove existence of a sufficiently regular solution and to guarantee thermodynamic consistency, we make the following assumptions on the model parameters.
\begin{itemize}
    \item[(A1)] The interface parameters $\gamma_\rho,\gamma_\eta>0$ are positive constants.
    \item[(A2)] The mobility function $\LL: (\rho,\nabla\rho,\eta,\nabla\eta) \to \LL(\rho,\nabla\rho,\eta,\nabla\eta)$ is smooth and uniformly positive definite, i.e.,  such that 
    \begin{equation*}
      \lambda_1\snorm{\xi}^2 \geq \mathbf{\xi}^\top\LL(\cdot) \, \mathbf{\xi} \geq \lambda_0\snorm{\xi}^2 \qquad \forall \mathbf{\xi}\in\mathbb{R}^{d+1}
    \end{equation*}
    and for all arguments $(\rho,\nabla\rho,\eta,\nabla\eta) \in \RR^{2d+2}$ with positive constants $\lambda_0,\lambda_1>0$. This in particular implies that all components of $\LL$ are uniformly bounded by a constant.
    Furthermore the derivatives $|\frac{\partial \LL}{\partial \rho}|,|\frac{\partial \LL}{\partial \eta}|,|\frac{\partial \LL}{\partial \nabla\rho}|,|\frac{\partial \LL}{\partial \nabla\eta}|\leq
    C_L$ are bounded uniformly.
    \item[(A3)] The potential $f : (\rho,\eta) \to f(\rho,\eta) \ge 0$ is smooth and non-negative. Moreover
    \begin{align*}
         \snorm*{f(\rho,\eta)}&\leq p_4(\rho,\eta),\qquad \snorm*{\partial_\rho^k\partial_\eta^l f(\rho,\eta)} \leq p_{4-k-l}(\rho,\eta), \quad \forall k+l\leq 3.
    \end{align*} 
    where $p_j(\rho,\eta)$ are polynomials of maximal degree $j$ in $\rho$ and $\eta$.
    \item[(A4)] There exists a constant $\alpha\geq 0$ such that
    \begin{equation*}
        g_\alpha(\rho,\eta):= f(\rho,\eta) + \frac{\alpha}{2}|\rho|^2 + \frac{\alpha}{2}|\eta|^2 
        \qquad \text{is strictly convex.}
    \end{equation*}
\end{itemize}
%

\begin{example} \label{ex:parameters}
A typical example for the potential function $f$ and the mobility matrix $\LL$, which is based on the choices in \cite{Tonks_2015,Oyedeji2022}, is given by
\begin{align*}
 f(\rho,\eta) &= C\rho^2(1-\rho)^2 + D \, [\rho^2 + 6(1-\rho)(\eta^2 + (1-\eta)^2) \\
 &- 4(2-\rho)(\eta^3 + (1-\eta)^3)  + 3(\eta^2 + (1-\eta)^2 )^2 ]  \\  
 \intertext{with appropriate constants $C,D>0$, and}
 \LL_{11} 
 &= a \mathbf{I} +  b\mathbf{n}_{1}(\rho)\otimes \mathbf{n}_{1}(\rho), 
 \qquad \LL_{12} = \LL_{21}^\top = c \mathbf{n}_{1}(\rho), \qquad \LL_{22} = d,   
 \end{align*}
with $a,d>0$ and $b d \ge c^2$, which ensures positive definiteness of the mobility matrix. In principle, these parameters could also depend on the variables $\rho$ and $\eta$. Here $\mathbf{n}_1(\rho):=\tfrac{\nabla\rho}{\sqrt{1+\snorm{\nabla\rho}^2}}$ denotes the regularised normal vector of $\rho$.
For $b=c=0$ the mobility matrix becomes diagonal and due to the special choice of the function $f$, which seems to be the standard in the literature \cite{WANG2006953,ahmed2013}, the two equations decouple completely. 
\end{example}

Note that in \cite{Tonks_2015,Oyedeji2022}, even more complicated forms of the mobility matrix are considered, which also depend on the normal vector of the other phase-field $\eta$.

\section{Main results}
\label{sec:main}

In the following, we summarize the main results of the paper. The proofs will be given in the subsequent sections. 
We start with clarifying the notion of a weak solution. 
\begin{definition}\label{defn:weak_sol}
For given initial data 
 $\rho_0,\eta_0\in H^1(\Omega)$.
A quadruple $(\rho,\mu_\rho,\eta,\mu_\eta)$ of functions 
\begin{align*}
    \rho &\in L^2(0,T;H^3(\Omega))\cap H^1(0,T;H^{-1}(\Omega)),& \mu_\rho &\in L^2(0,T;H^1(\Omega)), \\
    \eta &\in L^2(0,T;H^2(\Omega))\cap H^1(0,T;L^2(\Omega)),& \mu_\eta &\in L^2(0,T;L^2(\Omega)),
\end{align*}
is called a weak solution of \eqref{eq:s1}--\eqref{eq:s2}, if it satisfies the variational identities
\begin{align}
 \la \dt\rho,v_1 \ra + \la \LL_{11}\nabla\mu_\rho,\nabla v_1 \ra + \la \mu_\eta\LL_{12},\nabla v_1 \ra &= 0, \label{eq:1}\\
 \la \mu_\rho,v_2 \ra - \gamma_\rho\la \nabla\rho,\nabla v_2 \ra - \la \partial_\rho f(\rho,\eta),v_2 \ra &= 0, \label{eq:2}\\
 \la \dt\eta,w_1 \ra + \la w_1\LL_{12},\nabla\mu_\rho \ra + \la \LL_{22}\mu_\eta,w_1 \ra &= 0,\label{eq:3}\\
 \la \mu_\eta,w_2 \ra - \gamma_\eta\la \nabla\eta,\nabla w_2 \ra - \la \partial_\eta f(\rho,\eta),w_2 \ra &= 0, \label{eq:4}
\end{align}
for all test functions $v_1,v_2,w_2 \in H^1(\Omega)$ and $w_1\in L^2(\Omega)$, and a.a. $0 \le t \le T$. Furthermore, the initial data are attained in a weak sense.
Note that  the solution components depend on $t$, while the test functions are independent of time.
A weak solution is called \emph{dissipative}, if it additionally satisfies 
\begin{align}
\mathcal{E}(\rho,\eta)(t) + \int_0^t \D_{\rho,\eta}(\mu_\rho,\mu_\eta)(s) \, ds \le \E(\rho,\eta)(0). \label{eq:energy_ineq}
\end{align}
\end{definition}

By the usual interpolation theorems for Bochner spaces \cite[Theorem II.5.13]{Boyer2013}, one can deduce that the phase-field components of a weak solution also satisfy $\rho,\eta \in C([0,T];H^1(\Omega))$.

With this in mind, we can now state the first main result of our paper. 

\begin{theorem}[Existence of dissipative weak solutions]\label{thm:existence} $ $\\
Let (A0)--(A4) hold. Then for any $T>0$ and any pair of initial values $\phi_0,\eta_0\in H^1(\Omega)$, there exists at least one dissipative weak solution in the sense of Definition~\ref{defn:weak_sol}. 
\end{theorem}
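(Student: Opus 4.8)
The plan is to construct a solution by the Faedo--Galerkin method, derive uniform a priori bounds in precisely the strong norms appearing in Definition~\ref{defn:weak_sol}, and then pass to the limit by compactness. Let $\{w_k\}_{k\in\NN}$ denote the eigenfunctions of $-\Delta$ on the torus $\Omega$; they form an $L^2(\Omega)$-orthogonal basis that is also orthogonal in $H^1(\Omega)$ and invariant under $-\Delta$ and under the $L^2$-projection $P_n$ onto $V_n:=\spn\{w_1,\dots,w_n\}$. For each $n$ I seek $\rho_n,\mu_{\rho,n},\eta_n,\mu_{\eta,n}\in V_n$ satisfying \eqref{eq:1}--\eqref{eq:4} tested against functions in $V_n$, with $\rho_n(0),\eta_n(0)$ the $H^1(\Omega)$-projections of $\phi_0,\eta_0$. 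Because the basis is $-\Delta$-invariant, \eqref{eq:2} and \eqref{eq:4} determine $\mu_{\rho,n},\mu_{\eta,n}$ as smooth functions of $\rho_n,\eta_n$, so \eqref{eq:1} and \eqref{eq:3} reduce to a system of ordinary differential equations for the coefficients with right-hand side depending smoothly on them by (A2)--(A3); Picard--Lindel\"of gives a local solution, and the energy bound below shows the coefficients stay bounded on $[0,T]$, hence the solution is global.

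For the a priori estimates, I test \eqref{eq:1} with $\mu_{\rho,n}$, \eqref{eq:3} with $\mu_{\eta,n}$, \eqref{eq:2} with $\dt\rho_n$ and \eqref{eq:4} with $\dt\eta_n$ --- all admissible since $\dt\rho_n,\dt\eta_n\in V_n$ --- and add: the cross terms combine and the potential terms telescope, yielding the Galerkin analogue of \eqref{eq:defnD},
\begin{equation*}
\ddt\,\E(\rho_n,\eta_n) + \D_{\rho_n,\eta_n}(\mu_{\rho,n},\mu_{\eta,n}) = 0 .
\end{equation*}
Since the projected initial energies are bounded (using $H^1\hookrightarrow L^6$ and (A3)) and $\D_{\rho_n,\eta_n}(\mu_{\rho,n},\mu_{\eta,n})\ge \lambda_0(\norm{\na\mu_{\rho,n}}_0^2+\norm{\mu_{\eta,n}}_0^2)$ by (A2), this gives uniform bounds for $\rho_n,\eta_n$ in $L^\infty(0,T;H^1(\Omega))$, for $\int_\Omega f(\rho_n,\eta_n)$, and for $\na\mu_{\rho,n},\mu_{\eta,n}$ in $L^2(0,T;L^2(\Omega))$. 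Testing \eqref{eq:2} with the constant function bounds the spatial mean of $\mu_{\rho,n}$ by $\snorm{\inp{\partial_\rho f(\rho_n,\eta_n),1}}$, controlled via (A3) and the $L^\infty(H^1)$-bound, so Poincar\'e--Wirtinger upgrades this to $\mu_{\rho,n}$ bounded in $L^2(0,T;H^1(\Omega))$. Writing \eqref{eq:2}, \eqref{eq:4} as $-\gamma_\rho\Delta\rho_n = \mu_{\rho,n}-P_n\partial_\rho f(\rho_n,\eta_n)$ and $-\gamma_\eta\Delta\eta_n = \mu_{\eta,n}-P_n\partial_\eta f(\rho_n,\eta_n)$ and using the cubic growth in (A3) together with $H^1\hookrightarrow L^6$, elliptic regularity gives $\rho_n,\eta_n$ bounded in $L^2(0,T;H^2(\Omega))$. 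Differentiating the first of these identities and estimating $\na\partial_\rho f = \partial_\rho^2 f\,\na\rho_n+\partial_\rho\partial_\eta f\,\na\eta_n$ by $\norm{p_2(\rho_n,\eta_n)}_{0,3}(\norm{\na\rho_n}_{0,6}+\norm{\na\eta_n}_{0,6})$ --- a product of the time-uniform $L^\infty(H^1)$-bound and the $L^2(H^2)$-bound via $H^2\hookrightarrow W^{1,6}$ --- together with $\na\mu_{\rho,n}\in L^2(L^2)$, yields $\rho_n$ bounded in $L^2(0,T;H^3(\Omega))$. Finally \eqref{eq:1}, \eqref{eq:3} and boundedness of $\LL$ bound $\dt\rho_n$ in $L^2(0,T;H^{-1}(\Omega))$ and $\dt\eta_n$ in $L^2(0,T;L^2(\Omega))$.

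By these bounds and the Aubin--Lions--Simon lemma, along a subsequence $\rho_n\to\rho$ in $L^2(0,T;H^2(\Omega))\cap C([0,T];L^2(\Omega))$, $\eta_n\to\eta$ in $L^2(0,T;H^1(\Omega))\cap C([0,T];L^2(\Omega))$, while $\mu_{\rho,n}\rightharpoonup\mu_\rho$ in $L^2(0,T;H^1(\Omega))$ and $\mu_{\eta,n}\rightharpoonup\mu_\eta$ in $L^2(0,T;L^2(\Omega))$, with the strong-norm weak-$\ast$ limits identified accordingly; in particular $\rho_n,\eta_n,\na\rho_n,\na\eta_n$ converge a.e. Hence $\partial_\rho f(\rho_n,\eta_n),\partial_\eta f(\rho_n,\eta_n)$ converge strongly in $L^2(0,T;L^2(\Omega))$ by the growth bounds (A3) and the uniform higher integrability, and $\LL(\rho_n,\na\rho_n,\eta_n,\na\eta_n)\to\LL(\rho,\na\rho,\eta,\na\eta)$ strongly in every $L^q(0,T;L^q(\Omega))$ by (A2) and dominated convergence; products of these strong factors against the weakly convergent $\na\mu_{\rho,n},\mu_{\eta,n}$ pass to the limit, and a density argument in the test functions shows $(\rho,\mu_\rho,\eta,\mu_\eta)$ solves \eqref{eq:1}--\eqref{eq:4}, with initial data fixed by $\rho_n(0)\to\phi_0,\eta_n(0)\to\eta_0$ in $H^1(\Omega)$. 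For \eqref{eq:energy_ineq}, I integrate the Galerkin identity from $0$ to $t$: the initial energy converges, $\E$ is weakly lower semicontinuous on $H^1(\Omega)\times H^1(\Omega)$ (convexity of the gradient terms, nonnegativity and a.e.\ convergence for $\int_\Omega f$) and is evaluated at $\rho_n(t)\rightharpoonup\rho(t),\eta_n(t)\rightharpoonup\eta(t)$, while for $\int_0^t\D_{\rho_n,\eta_n}(\mu_{\rho,n},\mu_{\eta,n})$ one uses the elementary inequality $v_n^\top\LL_n v_n\ge 2v_n^\top\LL_n w-w^\top\LL_n w$ with $v_n=(\na\mu_{\rho,n},\mu_{\eta,n})$ and arbitrary smooth $w$: sending $n\to\infty$ (strong convergence of $\LL_n$, weak convergence of $v_n$) and then taking the supremum over $w$ recovers $\int_0^t\D_{\rho,\eta}(\mu_\rho,\mu_\eta)$ as a lower bound. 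This yields the dissipative weak solution.

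The main obstacle is the chain of strong a priori estimates, and specifically the $L^2(0,T;H^3(\Omega))$-bound for $\rho$: it requires first extracting $L^2(H^2)$-regularity from the potential equations \eqref{eq:2}, \eqref{eq:4} and then closing a bootstrap in which the polynomial growth conditions (A3) on $\partial_\rho^2 f,\partial_\rho\partial_\eta f$ are exactly matched, in three space dimensions, to the interpolation between $L^\infty(H^1)$ and $L^2(H^2)$ that controls $\na\partial_\rho f$ in $L^2(L^2)$. The cross-kinetic coupling itself enters only through the uniform definiteness and boundedness of $\LL$ in (A2), which is what makes the energy estimate and the limiting dissipation inequality work; a secondary technical point is the lower semicontinuity of $\D$ under simultaneous variation of the mobility (through $\rho_n,\eta_n,\na\rho_n,\na\eta_n$) and of the fluxes, which is handled by the convexity argument above.
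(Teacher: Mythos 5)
Your proposal is correct and follows essentially the same route as the paper: a spectral Galerkin scheme, the energy identity from the standard test functions, higher-order a priori bounds via elliptic regularity culminating in the $L^2(H^3)\times L^2(H^2)$ estimates, Aubin--Lions compactness with a.e.\ convergence of the mobility against weakly convergent fluxes, and lower semicontinuity for the energy inequality. The only (harmless) deviations are presentational: you make the $H^2\!\to\!H^3$ bootstrap and the Hölder exponents for $\nabla\partial_\rho f$ explicit where the paper says ``elementary computations'', and you prove lower semicontinuity of the dissipation via the convexity inequality $v_n^\top\LL_n v_n\ge 2v_n^\top\LL_n w-w^\top\LL_n w$ rather than the paper's $\norm{\LL^{1/2}(\cdot)}^2$ representation.
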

A detailed proof of this result is presented in Section~\ref{sec:existence}.
%
%
As a next step, we investigate stability of weak solutions w.r.t. perturbations. Using assumption (A4), one can see that the regularized energy functional $\E_\alpha(\rho,\eta) = \E(\rho,\eta) + \frac{\alpha}{2} \|\rho\|^2_0 + \frac{\alpha}{2} \|\eta\|^2_{0}$ is strictly convex. 
In order to measure the distance between two pairs of functions $(\rho,\eta)$ and $(\hat \rho,\hat \eta)$, we then utilize the \emph{relative energy} functional
\begin{align*}
    \mathcal{E}_\alpha(\rho,\eta|\hat\rho,\hat\eta) := \int_\Omega &\frac{\gamma_\rho}{2}\snorm{\nabla(\rho-\hat\rho)}^2 + \frac{\gamma_\eta}{2}\snorm{\nabla(\eta-\hat\eta)}^2 + \frac{\alpha}{2}\snorm{\rho-\hat\rho}^2 + \frac{\alpha}{2}\snorm{\eta-\hat\eta}^2\\
    &+ f(\rho,\eta) - f(\hat\rho,\hat\eta) -\partial_\rho f(\hat\rho,\hat\eta)(\rho-\hat\rho) -\partial_\eta f(\hat\rho,\hat\eta)(\eta-\hat\eta).
\end{align*}
This corresponds to the Bregman distance induced by the strictly convex regularized energy functional $\E_\alpha(\cdot)$; see \cite{Dafermos.1979}. 
As particular candidate functions, we consider weak solutions $(\rho,\mu_\rho,\eta,\mu_\eta)$ of \eqref{eq:s1}--\eqref{eq:s2} and certain perturbations thereof. 
For any given, sufficiently smooth function $(\hat\rho,\hat\mu_\rho,\hat\eta,\hat\mu_\eta)$, we define the residuals $r_i$, $i=1,...4$ via
\begin{align}
 \la r_1,v_1 \ra &:= \la \dt\hat\rho,v_1 \ra + \la \LL_{11}\nabla\hat\mu_\rho,\nabla v_1 \ra + \la \hat\mu_\eta\LL_{12},\nabla v_1 \ra , \label{eq:p1}\\
 \la r_2,v_2 \ra &:= \la \hat\mu_\rho,v_2 \ra - \gamma_\rho\la \nabla\hat\rho,\nabla v_2 \ra - \la \partial_\rho f(\hat\rho,\hat\eta),v_2 \ra , \label{eq:p2}\\
   \la r_3,w_1 \ra &:= \la \dt\hat\eta,w_1 \ra + \la w_1\LL_{12},\nabla\hat\mu_\rho \ra + \la \LL_{22}\hat\mu_\eta,w_1 \ra,\label{eq:p3}\\
 \la r_4,w_2 \ra &:= \la \hat\mu_\eta,w_2 \ra - \gamma_\eta\la \nabla\hat\eta,\nabla w_2 \ra - \la \partial_\eta f(\hat\rho,\hat\eta),w_2 \ra. \label{eq:p4}
\end{align}
Like in Definition~\ref{defn:weak_sol}, the variational identities are assumed to hold for all test functions $v_1,v_2,w_2 \in H^1(\Omega)$ and $w_1 \in L^2(\Omega)$, and for a.a. $0 \le t \le T$.
Let us note that the mobility matrix depends on  $(\rho,\eta)$, i.e., we use some sort of linearization of the system around this solution.
We can now state the second main result of our manuscript.

\begin{theorem}[Nonlinear stability]\label{thm:stab} $ $\\
Let $(\rho,\mu_\rho,\eta,\mu_\eta)$ be a dissipative weak solution solving \eqref{eq:s1}--\eqref{eq:s2} and $r_i$, $i=1,\ldots,4$ denote the residuals defined by \eqref{eq:p1}--\eqref{eq:p4} for a set $(\hat \rho,\hat \mu_\rho,\hat \eta,\hat \mu_\eta)$ of functions satisfying
\begin{align*}
    \hat \rho &\in L^2(0,T;H^3(\Omega))\cap H^1(0,T;H^{-1}(\Omega))\cap W^{1,1}(0,T;L^2(\Omega)), &  
    \hat \mu_\rho &\in L^2(0,T;H^1(\Omega))\\
    \hat \eta &\in L^2(0,T;H^2(\Omega))\cap H^1(0,T;L^2(\Omega)), &  \hat \mu_\eta &\in L^2(0,T;L^2(\Omega)).
\end{align*}
Then the following stability estimate holds 
\begin{align} \label{eq:stability}
\E_\alpha(\rho,\eta|\hat\rho,\hat\eta)(t) &+ \int_0^t \D_{\rho,\eta}(\mu_\rho-\hat\mu_\rho,\mu_\eta-\hat\mu_\eta) \\
&\leq C_1\E_\alpha(\rho,\eta|\hat\rho,\hat\eta)(0) 
+ C_2\int_0^t \norm{r_1}_{-1}^2 + \norm{r_2}_{1}^2  + \norm{r_3}_{0}^2 + \norm{r_4}_{0}^2  \, ds   \notag
\end{align}
with $C_1,C_2$ depending only on the bounds for $\rho$, $\eta$, $\hat\rho$, $\hat\eta$ in $L^\infty(H^1)$ and for $\hat \rho,\hat\eta$ in $W^{1,1}(L^2)$.
\end{theorem}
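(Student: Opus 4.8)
The plan is to compute $\frac{d}{dt}\E_\alpha(\rho,\eta|\hat\rho,\hat\eta)$ along the two solutions and to control all resulting terms by the relative energy itself plus the dual norms of the residuals, so that a Gronwall argument closes the estimate. First I would split the relative energy into the quadratic gradient part, the quadratic $\alpha$-part, and the Bregman remainder built from $f$. For the dissipative weak solution $(\rho,\mu_\rho,\eta,\mu_\eta)$ the chain rule is only justified in integrated form, so I would work with the energy inequality \eqref{eq:energy_ineq} for the exact solution, an energy identity obtained by testing \eqref{eq:p1}--\eqref{eq:p4} for the smooth function $(\hat\rho,\hat\mu_\rho,\hat\eta,\hat\mu_\eta)$, and the mixed terms coming from testing the weak formulation of $(\rho,\eta)$ with quantities built from $(\hat\rho,\hat\eta)$ and vice versa. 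The natural test functions are $v_1=\mu_\rho-\hat\mu_\rho$ and $w_1=\mu_\eta-\hat\mu_\eta$ in the evolution equations \eqref{eq:1},\eqref{eq:3} (and their hatted analogues), and $v_2,w_2$ equal to time derivatives of the phase-field differences in \eqref{eq:2},\eqref{eq:4}; the Bregman structure is precisely designed so that the terms involving $\partial_\rho f$, $\partial_\eta f$ and $\partial_t\hat\rho$, $\partial_t\hat\eta$ recombine into $\frac{d}{dt}$ of the $f$-part of $\E_\alpha(\rho,\eta|\hat\rho,\hat\eta)$ up to lower-order commutators.

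Next I would collect the dissipation. The diagonal-in-$\LL$ part produces $-\D_{\rho,\eta}(\mu_\rho-\hat\mu_\rho,\mu_\eta-\hat\mu_\eta)$ on the left, using the uniform positive definiteness in (A2) with the mobility evaluated at $(\rho,\eta)$ in both the exact and the perturbed identities; this is why the residuals are defined with $\LL$ linearized around $(\rho,\eta)$. The remaining terms fall into three groups: (i) residual terms $\la r_i,\cdot\ra$, which by the duality pairing \eqref{eq:dualnorm} and Young's inequality are bounded by $C\sum_i\|r_i\|_{\star}^2 + \tfrac{\lambda_0}{4}\D_{\rho,\eta}(\mu_\rho-\hat\mu_\rho,\mu_\eta-\hat\mu_\eta) + C\,\E_\alpha(\rho,\eta|\hat\rho,\hat\eta)$, matching the right-hand side of \eqref{eq:stability}; (ii) quadratic remainder terms from the Taylor expansion of $f$ and its derivatives around $(\hat\rho,\hat\eta)$, which by (A3)–(A4) and the Gagliardo–Nirenberg / Sobolev embedding $H^1\hookrightarrow L^6$ in $d\le 3$ are controlled by $C\,\E_\alpha(\rho,\eta|\hat\rho,\hat\eta)$ with $C$ depending on the $L^\infty(H^1)$ bounds of the four phase-field components; (iii) commutator terms measuring that $\LL$ is evaluated at $(\rho,\eta)$ rather than $(\hat\rho,\hat\eta)$ in the hatted identities — these bring in factors like $\na(\mu_\rho-\hat\mu_\rho)$ times $(\eta-\hat\eta)$ or $\na(\rho-\hat\rho)$, absorbed via $C_L$ from (A2), Young's inequality into the dissipation, and the relative energy. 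The $W^{1,1}(0,T;L^2)$ regularity of $\hat\rho,\hat\eta$ enters to make the $\la\partial_t\hat\rho,\cdot\ra$-type terms integrable in time after the Bregman recombination, and it is the reason $C_1,C_2$ depend on that norm.

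Assembling, one arrives at a differential inequality of the form $\frac{d}{dt}\E_\alpha(\rho,\eta|\hat\rho,\hat\eta) + \tfrac{\lambda_0}{2}\D_{\rho,\eta}(\mu_\rho-\hat\mu_\rho,\mu_\eta-\hat\mu_\eta) \le C(t)\,\E_\alpha(\rho,\eta|\hat\rho,\hat\eta) + C\sum_i\|r_i\|_\star^2$ with $C(t)\in L^1(0,T)$, valid in the integrated sense; Gronwall's lemma then yields \eqref{eq:stability} with $C_1=\exp(\int_0^T C)$ and $C_2$ of the same form. The main obstacle I anticipate is item (iii) together with making the formal chain rule rigorous: since the exact solution is only dissipative and has limited time regularity, the recombination of the $f$-terms into $\frac{d}{dt}$ of the Bregman remainder must be done carefully — either by a density/mollification argument in time or by testing with difference quotients — and the cross-kinetic commutator terms, which have no analogue in the diagonal-mobility case, require the $H^3(\Omega)$ regularity of $\rho$ (hence $\na\mu_\rho\in L^2$ and control of $\Delta\rho$) to be absorbed without losing a derivative. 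Everything else is Young's inequality, Sobolev embeddings, and bookkeeping.
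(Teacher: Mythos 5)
Your overall strategy coincides with the paper's: differentiate the Bregman-type relative energy, use \eqref{eq:2},\eqref{eq:p2} and \eqref{eq:4},\eqref{eq:p4} tested with $\dt(\rho-\hat\rho)$, $\dt(\eta-\hat\eta)$ to convert the energy terms into pairings with $\mu_\rho-\hat\mu_\rho$ and $\mu_\eta-\hat\mu_\eta$, then use the evolution equations \eqref{eq:1},\eqref{eq:p1},\eqref{eq:3},\eqref{eq:p3} to extract $-\D_{\rho,\eta}(\mu_\rho-\hat\mu_\rho,\mu_\eta-\hat\mu_\eta)$, control the Taylor remainders of $f$ by $\E_\alpha$ via $H^1\hookrightarrow L^6$ and the $L^\infty(H^1)$ bounds, and close with Gr\"onwall. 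However, there is one concrete ingredient missing from your outline that the argument cannot do without. When you bound the residual term $\la r_1,\mu_\rho-\hat\mu_\rho\ra\le \norm{r_1}_{-1}\norm{\mu_\rho-\hat\mu_\rho}_1$ and apply Young's inequality, you are left with $\delta\norm{\mu_\rho-\hat\mu_\rho}_1^2$, but the dissipation functional only controls $\norm{\nabla(\mu_\rho-\hat\mu_\rho)}_0^2$ (and $\norm{\mu_\eta-\hat\mu_\eta}_0^2$), not the mean value of $\mu_\rho-\hat\mu_\rho$. Absorption into $\D_{\rho,\eta}$ therefore fails unless you separately estimate $\la\mu_\rho-\hat\mu_\rho,1\ra^2$; the paper does this (its Step~3 and the second estimate of Lemma~\ref{lem:properties}) by taking $v_2=1$ in \eqref{eq:2},\eqref{eq:p2}, which gives $\la\mu_\rho-\hat\mu_\rho,1\ra=\la\partial_\rho f(\rho,\eta)-\partial_\rho f(\hat\rho,\hat\eta)-r_2,1\ra$ and hence a bound by $C\E_\alpha(\rho,\eta|\hat\rho,\hat\eta)+C\norm{r_2}_0^2$ via (A3) and Sobolev embedding. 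You need to add this Poincar\'e/mean-value step explicitly.

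A second, more minor remark: your group (iii) of ``commutator terms'' from evaluating $\LL$ at $(\rho,\eta)$ versus $(\hat\rho,\hat\eta)$ does not actually occur in this theorem. As you yourself note, the residuals \eqref{eq:p1}--\eqref{eq:p4} are \emph{defined} with the mobility evaluated at $(\rho,\eta)$, so any mismatch is already contained in $r_1$, $r_3$; the commutator estimates only enter later (in the weak--strong uniqueness and perturbation theorems), where they are used to bound those residuals by the relative energy. Including them here is harmless but redundant, and the $H^3$ regularity of $\rho$ is not needed for that purpose in this proof.
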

The key steps of the proof of this result will be given in Section~\ref{sec:stab}.
Before we proceed to the verification of our main results, let us mention two important corollaries. 

\begin{theorem}[Stability w.r.t. initial values and weak-strong uniqueness] 
\label{thm:uniqueness} $ $\\
Let $(\rho,\mu_\rho,\eta,\mu_\eta)$ be a dissipative weak solution of \eqref{eq:s1}-\eqref{eq:s2} in the sense of Definition~\ref{defn:weak_sol}, and let $(\hat\rho,\hat\mu_\rho,\hat\eta,\hat\mu_\eta)$ denote another dissipative weak solution satisfying the additional regularity assumptions $\hat \rho\in W^{1,1}(0,T;L^2(\Omega))$ and $\nabla \hat \mu_\rho, \hat \mu_\eta \in L^2(0,T;L^\infty(\Omega))$.   
Then 
\begin{align*}
\E_\alpha(\rho,\eta|\hat\rho,\hat\eta)(t) + \int_0^t \mathcal{D}_{\rho,\eta}(\mu_\rho-\hat\mu_\rho,\mu_\eta-\hat\mu_\eta) ds \leq C_1\E_\alpha(\rho,\eta|\hat\rho,\hat\eta)(0). 
\end{align*}
If $(\rho(0),\eta(0))=(\hat\rho(0),\hat\eta(0))$, then $(\rho,\eta) = (\hat\rho, \hat\eta)$ and $(\mu_\rho,\mu_\eta)=(\hat\mu_\rho, \hat\mu_\eta)$ coincide.
\end{theorem}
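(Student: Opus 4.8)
The plan is to obtain the estimate as a direct consequence of the nonlinear stability bound \eqref{eq:stability} in Theorem~\ref{thm:stab}, applied with the perturbation $(\hat\rho,\hat\mu_\rho,\hat\eta,\hat\mu_\eta)$ chosen as the second dissipative weak solution. The regularity demanded in Theorem~\ref{thm:stab} is available: a weak solution already has $\hat\rho\in L^2(H^3)\cap H^1(H^{-1})$, $\hat\eta\in L^2(H^2)\cap H^1(L^2)$, $\hat\mu_\rho\in L^2(H^1)$ and $\hat\mu_\eta\in L^2(L^2)$, and the hypothesis $\hat\rho\in W^{1,1}(0,T;L^2(\Omega))$ supplies the only missing ingredient. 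The first step is to identify the residuals $r_i$ from \eqref{eq:p1}--\eqref{eq:p4}. Since $(\hat\rho,\hat\mu_\rho,\hat\eta,\hat\mu_\eta)$ satisfies \eqref{eq:2} and \eqref{eq:4}, which carry no mobility, one reads off $r_2=r_4=0$. On the other hand, $(\hat\rho,\ldots)$ fulfils \eqref{eq:1} and \eqref{eq:3} with the mobility evaluated at $(\hat\rho,\na\hat\rho,\hat\eta,\na\hat\eta)$, whereas in \eqref{eq:p1}, \eqref{eq:p3} it is frozen at $(\rho,\na\rho,\eta,\na\eta)$; subtracting the two sets of identities gives, with the shorthand $\Delta\LL_{ij}:=\LL_{ij}(\rho,\na\rho,\eta,\na\eta)-\LL_{ij}(\hat\rho,\na\hat\rho,\hat\eta,\na\hat\eta)$,
\begin{align*}
\la r_1,v_1\ra &= \la \Delta\LL_{11}\,\na\hat\mu_\rho,\na v_1\ra + \la \hat\mu_\eta\,\Delta\LL_{12},\na v_1\ra, \\
\la r_3,w_1\ra &= \la w_1\,\Delta\LL_{12},\na\hat\mu_\rho\ra + \la \Delta\LL_{22}\,\hat\mu_\eta,w_1\ra.
\end{align*}

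The second step estimates these residuals. By the uniform Lipschitz bound on $\LL$ in (A2) one has $\snorm{\Delta\LL_{ij}}\le C_L(\snorm{\rho-\hat\rho}+\snorm{\na(\rho-\hat\rho)}+\snorm{\eta-\hat\eta}+\snorm{\na(\eta-\hat\eta)})$ pointwise, so, pulling the factors $\na\hat\mu_\rho$ and $\hat\mu_\eta$ out in $L^\infty$,
\begin{align*}
\norm{r_1}_{-1}^2+\norm{r_3}_0^2 \le C\big(\norm{\na\hat\mu_\rho}_{0,\infty}^2+\norm{\hat\mu_\eta}_{0,\infty}^2\big)\big(\norm{\rho-\hat\rho}_1^2+\norm{\eta-\hat\eta}_1^2\big).
\end{align*}
Invoking that the relative energy controls the $H^1$-distance, $\norm{\rho-\hat\rho}_1^2+\norm{\eta-\hat\eta}_1^2\le C\,\E_\alpha(\rho,\eta|\hat\rho,\hat\eta)$ with a constant governed by the common $L^\infty(H^1)$-bounds (as in the proof of Theorem~\ref{thm:stab}), and writing $m(s):=\norm{\na\hat\mu_\rho(s)}_{0,\infty}^2+\norm{\hat\mu_\eta(s)}_{0,\infty}^2\in L^1(0,T)$, Theorem~\ref{thm:stab} yields
\begin{align*}
\E_\alpha(\rho,\eta|\hat\rho,\hat\eta)(t)+\int_0^t\D_{\rho,\eta}(\mu_\rho-\hat\mu_\rho,\mu_\eta-\hat\mu_\eta)\,ds \le C_1\E_\alpha(\rho,\eta|\hat\rho,\hat\eta)(0)+C\int_0^t m(s)\,\E_\alpha(\rho,\eta|\hat\rho,\hat\eta)(s)\,ds.
\end{align*}
Because $\D_{\rho,\eta}$ is a homogeneous quadratic form, its Bregman distance $\D_{\rho,\eta}(\mu_\rho,\mu_\eta|\hat\mu_\rho,\hat\mu_\eta)$ equals $\D_{\rho,\eta}(\mu_\rho-\hat\mu_\rho,\mu_\eta-\hat\mu_\eta)$, and an application of Gr\"onwall's lemma absorbs the last term into a new constant $C_1\exp(C\norm{m}_{L^1(0,T)})$, which is exactly the asserted inequality.

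Finally, if $(\rho(0),\eta(0))=(\hat\rho(0),\hat\eta(0))$ then every term of $\E_\alpha(\rho,\eta|\hat\rho,\hat\eta)(0)$ vanishes, so the estimate forces $\E_\alpha(\rho,\eta|\hat\rho,\hat\eta)(t)=0$ for a.a.\ $t\in(0,T)$; strict convexity of $g_\alpha$ from (A4) then gives $(\rho,\eta)=(\hat\rho,\hat\eta)$ a.e.\ in space-time. With $\rho=\hat\rho$ and $\eta=\hat\eta$ in hand, the identity \eqref{eq:2} and its hatted analogue (i.e.\ $r_2=0$) have the same right-hand side, whence $\mu_\rho=\hat\mu_\rho$, and likewise $\mu_\eta=\hat\mu_\eta$ from \eqref{eq:4}. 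The step I expect to be the main obstacle is the treatment of $r_1$ and $r_3$: since the mobility in \eqref{eq:p1}--\eqref{eq:p4} is frozen at the first solution, the mismatches $\Delta\LL_{ij}$ do not drop out, and turning them — via the Lipschitz bound (A2) — into an $L^1$-in-time multiple of $\E_\alpha$ is precisely what necessitates the additional hypothesis $\na\hat\mu_\rho,\hat\mu_\eta\in L^2(0,T;L^\infty(\Omega))$; once this is secured, the conclusion follows from Gr\"onwall's lemma and the coercivity of the relative energy.
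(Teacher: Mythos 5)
Your proposal is correct and follows essentially the same route as the paper: identify $r_2=r_4=0$ and express $r_1,r_3$ through the mobility mismatch $\LL-\Lh$, bound this mismatch via the Lipschitz estimates in (A2), pull $\nabla\hat\mu_\rho,\hat\mu_\eta$ out in $L^\infty$, control the remaining $H^1$-differences by the relative energy, and close with Theorem~\ref{thm:stab} and Gr\"onwall. Your spelled-out derivation of the second claim (strict convexity of $g_\alpha$ forcing $(\rho,\eta)=(\hat\rho,\hat\eta)$, then recovering the potentials from \eqref{eq:2} and \eqref{eq:4}) is a welcome elaboration of what the paper leaves as ``follows immediately.''
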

The proof of this result will be given in Section~\ref{sec:uniqueness}. 
The second claim asserts that existence of a sufficiently regular weak solution implies the uniqueness of dissipative weak solutions, which may be referred to as a \emph{weak-strong uniqueness principle}; see e.g. \cite{Feireisl2022}.

\medskip 

As another consequence of our abstract error analysis, let us mention the stability of solutions w.r.t. perturbations in the model parameters.
For ease of presentation, we focus on perturbations in the mobility matrix.
We assume that 
\begin{equation}
 \norm{\LL(\hat\rho,\nabla\hat\rho,\hat\eta,\nabla\hat\eta)-\tilde\LL(\hat\rho,\nabla\hat\rho,\hat\eta,\nabla\hat\eta)}_0^2 \leq \varepsilon,  \label{eq:measure_error}
\end{equation}
where $(\hat \rho, \hat \mu_\rho,\hat \eta,\hat \mu_\eta)$ is a dissipative weak solution of \eqref{eq:s1}--\eqref{eq:s2} satisfying the regularity conditions of Theorem~\ref{thm:uniqueness}. 
We then obtain the following result. 

\begin{theorem}[Stability with respect to $\LL$]
\label{thm:perturbation} $ $\\
Let 
$(\rho,\mu_\rho,\eta,\mu_\eta)$ be a weak solution of \eqref{eq:s1}-\eqref{eq:s2} with mobility function $\LL(\cdot)$, and $(\hat\rho,\hat\mu_\rho,\hat\eta,\hat\mu_\eta)$ be a solution of \eqref{eq:s1}-\eqref{eq:s2} with the mobility function $\tilde\LL(\cdot)$ satisfying $\hat \rho \in W^{1,1}(0,T;L^2(\Omega))$ and $\nabla \hat \mu_\rho,\hat \mu_\eta \in L^2(0,T;L^\infty(\Omega)$, and subject to the same initial data. 
Then 
\begin{align*}
\E_\alpha(\rho,\eta|\hat\rho,\hat\eta)(t) + \int_0^t \D_{\rho,\eta}(\mu_\rho-\hat\mu_\rho,\mu_\eta-\hat\mu_\eta) ds \leq  C\varepsilon,
\end{align*}
with constant $C$ depending only on the bounds for $\rho$, $\eta$, $\hat\rho$, $\hat\eta$ in $L^\infty(H^1)$, on the bounds for $\hat \rho,\hat\eta$ in $W^{1,1}(L^2)$, and on that for $\nabla\hat\mu_\rho$ and $\hat\mu_\eta$ in $L^2(L^\infty)$.
\end{theorem}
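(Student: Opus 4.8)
The plan is to derive this as a corollary of the nonlinear stability estimate in Theorem~\ref{thm:stab}, applied with the second solution $(\hat\rho,\hat\mu_\rho,\hat\eta,\hat\mu_\eta)$ in the role of the perturbed function and the mobility in the residuals \eqref{eq:p1}--\eqref{eq:p4} evaluated at the weak solution $(\rho,\eta)$. First I would note that here the residuals are particularly simple. Since $(\hat\rho,\hat\mu_\rho,\hat\eta,\hat\mu_\eta)$ solves \eqref{eq:s1}--\eqref{eq:s2} with mobility $\tilde\LL$, and the potential equations \eqref{eq:2}, \eqref{eq:4} contain no mobility, one gets $r_2 = r_4 = 0$; subtracting the $\hat\rho$- and $\hat\eta$-equations (with $\tilde\LL$) from the definitions \eqref{eq:p1}, \eqref{eq:p3} leaves only the kinetic terms,
\begin{align*}
\la r_1,v_1\ra &= \la(\LL_{11}-\tilde\LL_{11})\na\hat\mu_\rho,\na v_1\ra + \la\hat\mu_\eta\,(\LL_{12}-\tilde\LL_{12}),\na v_1\ra, \\
\la r_3,w_1\ra &= \la w_1\,(\LL_{12}-\tilde\LL_{12}),\na\hat\mu_\rho\ra + \la(\LL_{22}-\tilde\LL_{22})\,\hat\mu_\eta,w_1\ra,
\end{align*}
where $\LL = \LL(\rho,\na\rho,\eta,\na\eta)$ and $\tilde\LL = \tilde\LL(\hat\rho,\na\hat\rho,\hat\eta,\na\hat\eta)$. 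Testing and using Cauchy--Schwarz together with the $L^\infty(\Omega)$-bounds on $\na\hat\mu_\rho$ and $\hat\mu_\eta$ yields $\norm{r_1}_{-1} + \norm{r_3}_0 \le C\,\norm{\LL - \tilde\LL}_0\,\bigl(\norm{\na\hat\mu_\rho}_{0,\infty} + \norm{\hat\mu_\eta}_{0,\infty}\bigr)$.

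The next step is to estimate $\norm{\LL - \tilde\LL}_0$ by splitting
\begin{align*}
\LL(\rho,\na\rho,\eta,\na\eta) - \tilde\LL(\hat\rho,\na\hat\rho,\hat\eta,\na\hat\eta) = \bigl(\LL(\rho,\na\rho,\eta,\na\eta) - \LL(\hat\rho,\na\hat\rho,\hat\eta,\na\hat\eta)\bigr) + \bigl(\LL - \tilde\LL\bigr)(\hat\rho,\na\hat\rho,\hat\eta,\na\hat\eta).
\end{align*}
The last term has $L^2(\Omega)$-norm at most $\varepsilon^{1/2}$ by assumption \eqref{eq:measure_error}. For the first term, the uniform bound on the derivatives of $\LL$ in (A2) gives a pointwise Lipschitz estimate in the arguments, so its $L^2(\Omega)$-norm is controlled by $\norm{\rho-\hat\rho}_1 + \norm{\eta-\hat\eta}_1$, and hence by $C\,\E_\alpha(\rho,\eta|\hat\rho,\hat\eta)^{1/2}$ by the coercivity of the relative energy --- its quadratic gradient terms together with the $\tfrac{\alpha}{2}\norm{\cdot}_0^2$ terms when $\alpha>0$, and, when $\alpha=0$, conservation of the mean of $\rho$ and the Poincaré inequality for $\rho-\hat\rho$ together with strict convexity of $f$ for $\eta-\hat\eta$, exactly as in the proof of Theorem~\ref{thm:stab}.

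Inserting these bounds into \eqref{eq:stability}, and using that the initial data agree so that $\E_\alpha(\rho,\eta|\hat\rho,\hat\eta)(0) = 0$, gives
\begin{align*}
\E_\alpha(\rho,\eta|\hat\rho,\hat\eta)(t) + \int_0^t \D_{\rho,\eta}(\mu_\rho-\hat\mu_\rho,\mu_\eta-\hat\mu_\eta)\,ds \le C\int_0^t \bigl(\varepsilon + \E_\alpha(\rho,\eta|\hat\rho,\hat\eta)(s)\bigr)\,m(s)\,ds,
\end{align*}
with weight $m(s) := \norm{\na\hat\mu_\rho(s)}_{0,\infty}^2 + \norm{\hat\mu_\eta(s)}_{0,\infty}^2 \in L^1(0,T)$ by the assumed $L^2(0,T;L^\infty(\Omega))$-regularity of $\na\hat\mu_\rho$ and $\hat\mu_\eta$. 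Grönwall's inequality then gives $\E_\alpha(\rho,\eta|\hat\rho,\hat\eta)(t) \le C\varepsilon\exp\bigl(C\norm{m}_{L^1(0,T)}\bigr) \le C'\varepsilon$ uniformly in $t$, and reinserting this bound into the displayed inequality also controls the dissipation integral by $C'\varepsilon$; tracking the constants gives the stated dependence.

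The main obstacle is the coercivity estimate $\norm{\rho-\hat\rho}_1^2 + \norm{\eta-\hat\eta}_1^2 \le C\,\E_\alpha(\rho,\eta|\hat\rho,\hat\eta)$, that is, control of the full $H^1$-norm of the phase variables by the relative energy: for the conserved variable it uses mass conservation and Poincaré, for the non-conserved one assumption (A4), and the borderline case $\alpha = 0$ needs some care. This is, however, already a core ingredient in the proof of Theorem~\ref{thm:stab} and can simply be invoked. The second point one cannot avoid is the $L^2(0,T;L^\infty(\Omega))$-regularity of $\na\hat\mu_\rho$, $\hat\mu_\eta$, which is used both to convert the mobility discrepancy into bounds on the residual norms and to make the Grönwall weight integrable in time. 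Everything else reduces to Cauchy--Schwarz, Hölder in space against the $L^\infty$-bounds, and the Lipschitz estimate from (A2); perturbations of $\gamma_\rho$, $\gamma_\eta$, or of $f$ are handled in the same way, with additional residuals of the corresponding order.
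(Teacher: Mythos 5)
Your proposal is correct and follows essentially the same route as the paper: it derives $r_2=r_4=0$ and the kinetic residuals $r_1,r_3$ by subtracting the two weak formulations, splits $\LL-\tilde\LL$ into $(\LL-\Lh)+(\Lh-\tilde\LL)$ with the first part controlled by the relative energy via the Lipschitz bound from (A2) and Lemma~\ref{lem:properties} and the second by $\varepsilon$ via \eqref{eq:measure_error}, and concludes with Theorem~\ref{thm:stab}, the vanishing initial relative energy, and Gr\"onwall. The only differences are presentational (you make the $\alpha=0$ coercivity discussion and the integrable Gr\"onwall weight $m(s)$ explicit), not substantive.
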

The proof of this result again relies on the stability estimate of Theorem~\ref{thm:stab} and will be presented in Section~\ref{sec:perturbation}. 
Let us note that similar stability estimates can also be obtained for perturbations in the potential function $f(\cdot)$ and other problem data. 

%

\section{Proof of Theorem~\ref{thm:existence}}
\label{sec:existence}

We proceed with standard arguments for nonlinear parabolic equations \cite{Roubek2012}, i.e., Galerkin approximation and a-priori bounds. To handle the nonlinear terms, we use uniform a-priori bounds in strong norms and compact embeddings. 

\subsection*{Step~1: Galerkin approximation.} 
Following the standard procedure, we let  $\psi_n$, $n \in \NN_0$ denote the periodic eigenfunctions of
\begin{align*}
    -\Delta\psi_n = \lambda_n\psi_n.    
\end{align*}
They are chosen such that $\{\psi_n\}_{n=0}^\infty$ is  orthonormal in $L^2(\Omega)$ and orthogonal in $H^1(\Omega)$. 
Note that $\psi_0=const$ is one of the eigenfunctions.
%
This allows us to introduce the finite dimensional spaces $V_m=\spn\{\psi_n : 0 \le n \le m\}$ 
together with the associated projections 
%
$ P_{m} \, g = \sum\nolimits_{n=0}^m \la g,\psi_n \ra \psi_n$, which are orthogonal in $L^2(\Omega)$ and bounded in $H^k(\Omega)$, $k \ge 1$. 
We can then define the following Galerkin approximation for the weak form of \eqref{eq:s1}--\eqref{eq:s2}:
Find $\rho_m,\eta_m,\mu_{\rho,m},\mu_{\eta,m} \in H^1(0,T;V_m)$ such that
\begin{align}
 \la \dt\rho_m,v_1 \ra + \la \LL_{11,m}\nabla\mu_{\rho,m},\nabla v_1 \ra + \la \mu_{\eta,m}\LL_{12,m},\nabla v_1 \ra &= 0, \label{eq:1m}\\
 \la \mu_{\rho,m},v_2 \ra - \gamma_\rho\la \nabla\rho_m,\nabla v_2 \ra - \la \partial_\rho f(\rho_m,\eta_m),v_2 \ra &= 0, \label{eq:2m}\\
 \la \dt\eta_m,w_1 \ra + \la w_1\LL_{12,m},\nabla\mu_\rho \ra + \la \LL_{22,m}\mu_{\eta,m},w_1 \ra &= 0,\label{eq:3m}\\
 \la \mu_{\eta,m},w_2 \ra - \gamma_\eta\la \nabla\eta_m,\nabla w_2 \ra - \la \partial_\eta f(\rho_m,\eta_m),w_2 \ra &= 0, \label{eq:4m}
\end{align}
for all periodic test functions $v_1,v_2,w_1,w_2 \in V_m$ and a.a. $0 \le t \le T$, and satisfying the initial conditions $\rho_m(0)=P_{m}\rho_0$ and $\eta_m(0)=P_{m} \eta_0$. 
We write $\LL_m=\LL(\rho_m,\mu_{\rho,m},\eta_m,\mu_{\eta,m})$ to denote the evaluation of the mobility function at the approximate solutions. 

\subsection*{Step~2. Well-posedness of discrete problem.}

We can use \eqref{eq:2m} and \eqref{eq:4m} to eliminate $\mu_{\rho,m}$, $\mu_{\eta,m}$, which leads to an ordinary differential equation for $(\rho_m,\eta_m)$. 
Due to assumptions (A1)--(A3), all parameter functions are smooth w.r.t. their arguments, and existence of a unique discrete solution $\rho_m,\eta_m \in C^1([0,T_m];V_m)$ up to some time $t \le T_m$ follows by the Picard-Lindelöf theorem. By \eqref{eq:2m} and \eqref{eq:4m}, we obtain $\mu_{\rho,m},\mu_{\eta,m} \in C([0,T_m];V_m)$.

\subsection*{Step~3: A-priori bounds~1.}
In order to prove existence for all $t \le T$, we  establish uniform bounds on the approximate solutions. These are obtained by testing the discrete variational identities \eqref{eq:1m}--\eqref{eq:4m} with $v_1=\mu_{\rho,m}$, $v_2=\dt\rho_m$, $w_1=\mu_{\eta,m}$,  and $w_2=\dt\eta_m$.  
With the same reasoning as on the continuous level, we obtain
\begin{align}
\mathcal{E}(\rho_m,\eta_m)(t) + \int_0^t 
\D_{\rho_m,\eta_m}(\mu_{\rho,m},\mu_{\eta,m})(s) \, ds =  \mathcal{E}(\rho_m,\eta_m)(0) \label{eq:disc_energy}
\end{align}
for all $0 \le t \le T$. 
From (A2), we infer that $\D_{\rho_m,\eta_m}(\mu_{\rho,m},\mu_{\eta,m}) \ge \gamma \norm{\nabla\mu_{\rho,m}}_0^2 + \norm{\mu_{\eta,m}}_0^2$, and together with assumptions (A1) and (A3), we readily obtain
\begin{align*}
    \norm{\nabla\rho_m}_{L^\infty(L^2)}^2 + \norm{\nabla\eta_m}_{L^\infty(L^2)}^2 + \norm{\nabla\mu_{\rho,m}}_{L^2(L^2)}^2 + \norm{\mu_{\eta,m}}_{L^2(L^2)}^2 \leq C_0. 
\end{align*}
Here $L^p(X) = L^p(0,T_m;X)$ denotes the norms on the time interval $(0,T_m)$.
To get control on the solution in the full norms, we make the following observations:
\begin{align*}
\dt\la \rho_m,1 \ra &= 0, \qquad \qquad \la \mu_{\rho,m},1 \ra = \la \partial_\rho f(\rho_m,\eta_m), 1\ra, \\
\norm{\eta_m}^2(t)  &\leq  \norm{\eta_m}^2(0) +   C(\LL_{m})\int_0^t\norm{\nabla\mu_{\rho,m}}_0^2 +  \norm{\mu_{\eta,m}}_0^2   \, ds,
\end{align*}
which follow from the discrete variational identities \eqref{eq:1m}--\eqref{eq:4m} almost immediately. 
Using stability of the projections $P_m$, the choice of discrete initial conditions, a Poincar\'e-type inequality, and the previous estimates, we arrive at the bound
\begin{align*}
    \norm{\rho_m}_{L^\infty(H^1)}^2 + \norm{\eta_m}_{L^\infty(H^1)}^2 + \norm{\mu_{\rho,m}}_{L^2(H^1)}^2 + \norm{\mu_{\eta,m}}_{L^2(L^2)}^2 \leq C_1,
\end{align*}
which holds uniformly in the discretization parameter $m$.
As a direct consequence, we see that the discrete solution can be extended uniquely to $T_m=T$, and the a-priori bounds hold on the full time interval.

\subsection*{Step~4. A-priori bounds~2.}
From orthogonality of the basis functions $\psi_n$, the assumptions (A1)--(A4), and the variational identities \eqref{eq:1m} and \eqref{eq:3m}, we further obtain  
\begin{align*}
    \norm{\dt\rho_m}_{L^2(H^{-1})}^2 + \norm{\dt\eta_m}_{L^2(L^2)}^2 \leq C_2.
\end{align*}
By assumption (A3) and some elementary computations, one can infer that $\partial_\rho f(\rho_m,\eta_m)$ and $\partial_\eta f(\rho_m,\eta_m)$ are bounded uniformly in $L^2(0,T;H^1(\Omega))$ and $L^2(0,T;L^2(\Omega))$, respectively. From \eqref{eq:2m} and \eqref{eq:4m}, we then further deduce that $\norm{\nabla\Delta\rho_m}_{L^2(L^2)}^2 + \norm{\Delta\eta_m}_{L^2(L^2)}^2 \leq C_3$.
Elliptic regularity results for the Poisson problem
and the previous estimates then lead to 
\begin{align*}
    \norm{\rho_m}_{L^2(H^3)}^2 + \norm{\eta_m}_{L^2(H^2)}^2 \leq C_4.
\end{align*}
These stronger bounds are essential to treat the nonlinear terms in the equations later on. 

\subsection*{Step~5: Extraction of convergent sub-sequences.}
As a consequence of the a-priori bounds and the Banach-Alaoglu theorem~\cite[Theorem II.2.7]{Boyer2013}, we can choose a sub-sequence of discrete solutions, again denoted with the same index, such that
\begin{alignat*}{5}
    \rho_m &\rightharpoonup^* \bar \rho \quad && \text{in } L^\infty(0,T;H^1(\Omega)), & \qquad \qquad 
    \eta_m &\rightharpoonup^* \bar \eta \quad && \text{in } L^\infty(0,T;H^1(\Omega)), \\
    \rho_m &\rightharpoonup \bar \rho \quad && \text{in } L^2(0,T;H^3(\Omega)),& \qquad 
    \eta_m &\rightharpoonup \bar \eta \quad && \text{in } L^2(0,T;H^2(\Omega)), \\
    \dt\rho_m &\rightharpoonup \dt\bar \rho \quad && \text{in } L^2(0,T;H^{-1}(\Omega)),& \qquad 
    \dt\eta_m &\rightharpoonup \dt\bar \eta \quad &&\text{in } L^2(0,T;L^2(\Omega)), \\
    \mu_{\rho,m} &\rightharpoonup \bar \mu_{\rho} \quad && \text{in } L^2(0,T;H^1(\Omega)),& \qquad 
    \mu_{\eta,m} &\rightharpoonup \bar \mu_{\eta} \quad && \text{in } L^2(0,T;L^2(\Omega)).\\
\intertext{By application of the Aubin-Lions lemma~\cite[Theorem II.5.16]{Boyer2013}, we further deduce that}
    \rho_m &\rightarrow \bar \rho \quad && \text{in }  C([0,T];L^p(\Omega)),& \qquad 
    \eta_m &\rightarrow \bar \eta \quad && \text{in } C([0,T];L^p(\Omega)), \text{ for } p < 6,\\
    \rho_m &\rightarrow \bar \rho \quad && \text{in }  L^2(0,T;H^2(\Omega)),& \qquad 
    \eta_m &\rightarrow \bar \eta \quad && \text{in } L^2(0,T;H^1(\Omega)),
\end{alignat*}
and after extraction of another sub-sequence, we may assume w.l.o.g. that     
\begin{equation*}
    \rho_m,\nabla\rho_m,\eta_m,\nabla\eta_m \rightarrow \bar \rho,\nabla\bar \rho,\bar \eta,\nabla\bar \eta \qquad \text{pointwise a.e. in } \Omega\times(0,T).
\end{equation*}

\subsection*{Step~6: Estimates for nonlinear terms.}
Using the continuity of $\LL(\cdot)$ provided by assumption (A2), we obtain $\LL_m\rightarrow \bar \LL=\LL(\bar \rho,\bar \nabla\rho,\bar \eta,\bar \nabla\eta)$ a.e. in $\Omega\times(0,T)$. Moreover, the functions $\LL_m,\LL$ are uniformly bounded in $L^\infty(\Omega \times (0,T))$. Therefore by \cite[Exercise 8.3]{Alt2016} and since the products are uniformly bounded in $L^2(\Omega \times (0,T))$ we obtain, up to a subsequence 
\begin{alignat*}{7}
\LL_{11,m}\nabla\mu_{\rho,m} & \rightharpoonup \bar \LL_{11}\nabla\bar \mu_{\rho}, \quad & 
\LL_{12,m}\nabla\mu_{\rho,m} & \rightharpoonup \bar \LL_{12}\nabla \bar \mu_{\rho}
\qquad && \text{in } L^2(0,T;L^2(\Omega)),   \\
\LL_{12,m}\nabla\mu_{\rho,m} & \rightharpoonup \bar \LL_{12}\nabla\bar \mu_{\rho}, \quad & 
\LL_{12,m}\mu_{\eta,m} & \rightharpoonup \bar \LL_{12}\bar \mu_{\eta} \qquad && \text{in } L^2(0,T;L^2(\Omega)).
\end{alignat*}
By assumption (A3) the potential $f(\cdot)$ and its derivatives behaves like polynomials of degree less than four. Using the strong convergence of $\rho_m,\eta_m$ in $C([0,T];L^p(\Omega))$ the strong convergences of $\rho_m$ in $L^2(0,T;H^2(\Omega))$ and $\eta_h$ in $L^2(0,T;H^1(\Omega))$ allows to deduce that 
\begin{alignat*}{5}
f(\rho_m,\eta_m) & \rightarrow  f(\bar \rho,\bar \eta) \qquad && \text{in } C([0,T];L^1(\Omega)),   \\   
\partial_\rho f(\rho_m,\eta_m) & \rightarrow  \partial_\rho f(\bar \rho,\bar \eta) \qquad && \text{in } L^2(0,T;L^2(\Omega)),   \\ 
\partial_\eta f(\rho_m,\eta_m) & \rightarrow  \partial_\eta f(\bar \rho,\bar \eta)
\qquad && \text{in } L^2(0,T;L^2(\Omega)).
\end{alignat*}
Hence we obtain the necessary convergence in all nonlinear terms.

\subsection*{Step~7: Passing to the limit.}
Using the density of $\bigcup_m V_m$ in $L^2(\Omega)$, we can pass to the limit in the variational identities \eqref{eq:1m}--\eqref{eq:4m}, and verify that $(\bar \rho, \bar \eta, \bar \mu_\rho, \bar \mu_\eta)$ is a weak solution of problem \eqref{eq:s1}--\eqref{eq:s2} in the sense of Definition~\ref{defn:weak_sol}. 
To prove the energy inequality \eqref{eq:energy_ineq}, we note that the energy functional $\E(\rho,\eta)$ is quadratic in $\nabla \rho$, $\nabla \eta$, and the remaining terms are of lower order. Hence $\E(\rho,\eta)$ is weakly lower semi-continuous on $H^1(\Omega)^2$. 
Furthermore, the dissipation term can be written as
\begin{align*} 
\int_0^t
\D_{\rho,\eta}(\mu_\rho,\mu_\eta )(s)\, ds=\norm*{\LL^{1/2}\begin{pmatrix} \nabla\mu_{\rho} \\ \mu_{\eta}\end{pmatrix} }_{L^2(0,t;L^2)}^2.
\end{align*}
From the convergence results stated in Step~5 and 6, the weak lower semi-continuity of the norm, and the discrete energy identity \eqref{eq:disc_energy}, we thus obtain 
\begin{align*}
&\E(\bar\rho,\bar\eta)(t) + \int_0^t \D_{\bar \rho,\bar \eta}(\bar \mu_\rho,\bar \mu_\eta)(s) \, ds 
\le \lim\inf_m \Big\{ \E(\rho_m,\eta_m)(t) + ...\\
& \qquad ... + \int_0^t \D_{\rho_m,\eta_m}( \mu_{\rho,m},\mu_{\eta,m})(s) \, ds \Big\}
= \lim\inf_m\E(\rho_m,\eta_m)(0) = \E(\rho,\eta)(0).
\end{align*}
This shows that the weak solution $(\bar \rho,\bar \eta,\bar \mu_\rho,\bar \mu_\eta)$ constructed as limit of Galerkin approximations is also dissipative in the sense of Definition~\ref{defn:weak_sol}. 
\qed

\bigskip 

Let us remark that elliptic regularity of the underlying Poisson problem was a key step to establish the regularity results required for the handling of the nonlinear terms. 
The existence proof thus carries over almost verbatim to problems with Neumann boundary conditions on domains $\Omega$ with sufficiently smooth boundary, as treated in \cite{Elliott2000}, for instance.

\section{Proof of Theorem~\ref{thm:stab}}
\label{sec:stab}

We start with a preliminary observation concerning the relative energy functional $\E_\alpha(\cdot|\cdot)$ and the dissipation functional $\D_{\rho,\eta}(\cdot,\cdot)$, which is required in the following arguments.
\begin{lemma}\label{lem:properties}
Let (A0)--(A4) hold. Then there exist constants $C_E$, $C_D$, such that
\begin{alignat*}{5}
  \norm{\rho-\hat\rho}_1^2 + \norm{\eta-\hat\eta}_1^2  
  &\leq C_E \, \E_\alpha(\rho,\eta|\hat\rho,\hat\eta),  \\
  \norm{\mu_\rho-\hat\mu_\rho}_1^2 + \norm{\mu_\eta-\hat\mu_\eta}_0^2
  &\le C_D \, \Big( \D_{\rho,\eta}(\mu_\rho - \hat \mu_\rho,\mu_\eta - \hat \mu_\eta)  + \la \mu_\rho - \hat \mu_\rho\,1\ra^2 \Big), 
\end{alignat*}
for all functions $\rho,\hat \rho,\eta,\hat \eta \in H^1(\Omega)$, $\mu_\rho,\hat \mu_\rho \in H^1(\Omega)$ and all $\mu_\eta,\hat \mu_\eta\in L^2(\Omega)$.
\end{lemma}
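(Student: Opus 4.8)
The two inequalities are essentially independent, so I would treat them separately. For the first, the plan is to bound the relative energy from below. Three of the terms in $\E_\alpha(\rho,\eta|\hat\rho,\hat\eta)$ are already the right ones: $\tfrac{\gamma_\rho}{2}\|\nabla(\rho-\hat\rho)\|_0^2$, $\tfrac{\gamma_\eta}{2}\|\nabla(\eta-\hat\eta)\|_0^2$, $\tfrac{\alpha}{2}\|\rho-\hat\rho\|_0^2$ and $\tfrac{\alpha}{2}\|\eta-\hat\eta\|_0^2$, and together with (A1) these already control $\|\nabla(\rho-\hat\rho)\|_0^2+\|\nabla(\eta-\hat\eta)\|_0^2$ plus, if $\alpha>0$, the $L^2$ parts. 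The subtlety is that (A4) only gives $\alpha\ge 0$, so if $\alpha=0$ the $L^2$-control of $\rho-\hat\rho$ and $\eta-\hat\eta$ must come from the $f$-terms. By Taylor's theorem the last line of $\E_\alpha$ equals $\tfrac12(\rho-\hat\rho,\eta-\hat\eta)^\top D^2 g_\alpha(\xi)(\rho-\hat\rho,\eta-\hat\eta) - \tfrac{\alpha}{2}|\rho-\hat\rho|^2 - \tfrac{\alpha}{2}|\eta-\hat\eta|^2$ pointwise for some intermediate point $\xi$, and adding back the explicit $\tfrac{\alpha}{2}$-terms shows $\E_\alpha$ dominates $\tfrac12(\rho-\hat\rho,\eta-\hat\eta)^\top D^2 g_\alpha(\xi)(\rho-\hat\rho,\eta-\hat\eta)\ge 0$ by convexity of $g_\alpha$ from (A4). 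This is nonnegative but not uniformly coercive, so for the $L^2$-bound one uses instead a Poincaré-type inequality: $\|u\|_0^2 \le C(\|\nabla u\|_0^2 + |\int_\Omega u|^2)$, and controls the averages $\int_\Omega(\rho-\hat\rho)$, $\int_\Omega(\eta-\hat\eta)$ — here the argument needs the $L^\infty(H^1)$-bounds on $\rho,\eta,\hat\rho,\hat\eta$ hypothesised in Theorem~\ref{thm:stab}, since the relative energy only sees gradients and the convex $f$-remainder. I would state the first inequality with $C_E$ depending on those bounds, exactly as the theorem allows.

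For the second inequality, I would start from (A2): the uniform positive-definiteness $\xi^\top\LL(\cdot)\xi\ge\lambda_0|\xi|^2$ gives, pointwise and then integrated,
\begin{align*}
\D_{\rho,\eta}(\mu_\rho-\hat\mu_\rho,\mu_\eta-\hat\mu_\eta) = \int_\Omega \begin{pmatrix}\nabla(\mu_\rho-\hat\mu_\rho)\\\mu_\eta-\hat\mu_\eta\end{pmatrix}^\top\!\!\LL\begin{pmatrix}\nabla(\mu_\rho-\hat\mu_\rho)\\\mu_\eta-\hat\mu_\eta\end{pmatrix}dx \ge \lambda_0\Big(\|\nabla(\mu_\rho-\hat\mu_\rho)\|_0^2 + \|\mu_\eta-\hat\mu_\eta\|_0^2\Big).
\end{align*}
This already controls $\|\mu_\eta-\hat\mu_\eta\|_0^2$ and the seminorm $\|\nabla(\mu_\rho-\hat\mu_\rho)\|_0^2$. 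To upgrade the latter to the full $H^1$-norm of $\mu_\rho-\hat\mu_\rho$, I invoke the same Poincaré-type inequality, which accounts for the extra average term $|\int_\Omega(\mu_\rho-\hat\mu_\rho)\,dx|^2$ appearing on the right-hand side of the claimed estimate. Collecting the two pieces gives the constant $C_D$ depending only on $\lambda_0$ and the Poincaré constant of $\Omega$.

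The main obstacle is the first inequality in the degenerate case $\alpha=0$: the Bregman/relative-energy functional is then built purely from gradient terms and a convex-but-not-strongly-convex remainder in $f$, so $L^2$-control of the differences $\rho-\hat\rho$, $\eta-\hat\eta$ is not immediate and must be recovered through Poincaré together with control of the spatial averages — which is precisely why the theorem's constants are allowed to depend on the $L^\infty(H^1)$ bounds rather than being absolute. Everything else (Taylor expansion of $f$, the quadratic-form estimate from (A2), the Poincaré inequality on the torus for zero-average functions) is routine.
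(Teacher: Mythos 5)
Your treatment of the second inequality is exactly the paper's implicit argument: assumption (A2) gives $\D_{\rho,\eta}(\mu_\rho-\hat\mu_\rho,\mu_\eta-\hat\mu_\eta)\ge\lambda_0\big(\|\nabla(\mu_\rho-\hat\mu_\rho)\|_0^2+\|\mu_\eta-\hat\mu_\eta\|_0^2\big)$, and the Poincar\'e inequality $\|u\|_0^2\le C\big(\|\nabla u\|_0^2+|\int_\Omega u\,dx|^2\big)$ on the torus supplies the missing $L^2$-part of $\mu_\rho-\hat\mu_\rho$; nothing to add there.

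For the first inequality your argument does not close. You correctly rewrite $\E_\alpha(\rho,\eta|\hat\rho,\hat\eta)=\int_\Omega \tfrac{\gamma_\rho}{2}|\nabla(\rho-\hat\rho)|^2+\tfrac{\gamma_\eta}{2}|\nabla(\eta-\hat\eta)|^2 + D_{g_\alpha}\,dx$, where $D_{g_\alpha}\ge 0$ is the pointwise Bregman remainder of $g_\alpha$ (the explicit $\tfrac{\alpha}{2}$-terms exactly absorb the possible negativity of the $f$-remainder), so the gradient parts are immediate; the issue is indeed the $L^2$-part. But your proposed repair --- Poincar\'e plus control of the means $\int_\Omega(\rho-\hat\rho)$, $\int_\Omega(\eta-\hat\eta)$ through $L^\infty(H^1)$ bounds on the individual functions --- only yields $\|\rho-\hat\rho\|_0^2\le C\,\E_\alpha(\rho,\eta|\hat\rho,\hat\eta)+C'$ with an \emph{additive} constant $C'$, since an a priori bound on the means is a bound by a constant, not by $\E_\alpha$. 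That is not the homogeneous estimate claimed, and the homogeneous form is what is actually used later (e.g.\ in \eqref{eq:poincare_mean} and in the Gronwall step, where an additive constant would destroy the conclusion that equal initial data force equal solutions). The means are not controlled by $\E_\alpha$ itself: $\eta$ is not conserved, and the lemma is stated for arbitrary $H^1$ functions. The reading consistent with the paper's one-line proof is that the convexity in (A4) is to be understood as \emph{uniform} convexity, $D^2 g_\alpha\ge\kappa\,\mathbf{I}$ with $\kappa>0$; then Taylor's theorem gives $D_{g_\alpha}\ge\tfrac{\kappa}{2}\big(|\rho-\hat\rho|^2+|\eta-\hat\eta|^2\big)$ pointwise and the first inequality follows with $C_E=\max(2/\gamma_\rho,2/\gamma_\eta,2/\kappa)$, with no Poincar\'e argument and no dependence on the candidate functions. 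You are right that literal strict convexity is insufficient (a $g_\alpha$ behaving like a fourth power near a critical point defeats the estimate for small constant perturbations), but the fix has to be made at the level of the convexity modulus, not by importing a priori bounds on the functions.
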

\begin{proof}
These estimates are a direct consequence of the convexity of the regularized energy functional, the positive definiteness of mobility matrix, and the Poincar\'e inequality. 
\end{proof}

We now present the proof of Theorem~\ref{thm:stab} assuming that the solutions under consideration are sufficiently smooth, which simplifies the presentation of the main arguments.
By a density argument, the results however generalize to general dissipative weak solutions. 
%

\subsection*{Step~1.}
Taking the time derivative of the relative energy functional leads to
\begin{align*}
\ddt \E_\alpha(\rho,\eta|\hat\rho,\hat\eta) 
&= \gamma_\rho\la \na(\rho-\hat\rho),\na \dt(\rho-\hat\rho) \ra  + \la\partial_\rho f(\rho,\eta)-\partial_\rho f(\hat\rho,\hat\eta),\dt(\rho-\hat\rho) \ra \\
&\qquad +\gamma_\eta\la \na(\eta-\hat\eta),\na \dt(\eta-\hat\eta) \ra  + \la\partial_\eta f(\rho,\eta)-\partial_\rho f(\hat\rho,\hat\eta),\dt(\eta-\hat\eta) \ra \\
&\qquad  + \la\partial_\rho f(\rho,\eta) -\partial_\rho f(\hat\rho,\hat\eta) - \partial_{\rho\rho} f(\rho,\eta)(\rho-\hat\rho) - \partial_{\rho\eta} f(\hat\rho,\hat\eta)(\eta-\hat\eta), \dt\hat\rho\ra \\
&\qquad + \la\partial_\eta f(\rho,\eta) -\partial_\eta f(\hat\rho,\hat\eta) - \partial_{\eta\eta} f(\rho,\eta)(\eta-\hat\eta) - \partial_{\rho\eta} f(\hat\rho,\hat\eta)(\rho-\hat\rho), \dt\hat\eta\ra \\
&\qquad +\alpha\la \rho-\hat\rho, \dt(\rho-\hat\rho)\ra + \alpha\la \eta-\hat\eta, \dt(\eta-\hat\eta)\ra \\
&= (i) + (ii) + (iii) + (iv) + (v) + (vi) + (vii) + (viii).
\end{align*}
The individual terms can now be estimated separately using the variational characterization of solutions and residuals and elementary arguments.

\subsection*{Step~2.}
By inserting $v_2=\dt(\rho-\hat\rho)$ into \eqref{eq:2} and \eqref{eq:p2}, we get
\begin{align*}
(i) + (ii) 
= \la \mu_\rho-\hat\mu_\rho&+r_2,\dt(\rho-\hat\rho) \ra 
= -\la \LL_{11}\nabla(\mu_\rho-\hat\mu_\rho),\nabla(\mu_\rho-\hat\mu_\rho-r_2) \ra \\
&  - \la \LL_{12}(\mu_\eta-\hat\mu_\eta),\nabla(\mu_\rho-\hat\mu_\rho-r_2) \ra + \la r_1,\mu_\rho-\hat\mu_\rho-r_2 \ra.
\end{align*}
In the second step, we used \eqref{eq:1} and \eqref{eq:p1} with test function $v_1=\mu_\rho-\hat\mu_\rho+r_2$.
In a similar manner, we may test \eqref{eq:4} and \eqref{eq:p4} with $w_2=\dt \eta - \dt \bar \eta$, to obtain
\begin{align*}
(iii) + (iv) 
= - \la \mu_\eta-\hat\mu_\eta&+r_4, \dt (\eta - \bar \eta) \rangle 
= - \la \LL_{12}(\mu_\eta-\hat\mu_\eta+r_4),\nabla(\mu_\rho-\hat\mu_\rho) \ra \\
& - \la \LL_{22}(\mu_\eta-\hat\mu_\eta),\mu_\eta-\hat\mu_\eta+r_4 \ra  + \la r_3,\mu_\eta-\hat\mu_\eta + r_4 \ra,
\end{align*}
where we used \eqref{eq:3} and \eqref{eq:p3} with test function $w_1=\mu_\eta-\hat\mu_\eta + r_4$ in the second step.
A combination of the two expressions and elementary arguments then lead to 
\begin{align*}
(i)+(ii)+(iii)+(iv) 
\leq -(1&-\delta) \D_{\rho,\eta}(\mu_\rho-\hat\mu_\rho,\mu_\eta-\hat\mu_\eta)  + \delta \|\mu_\rho - \hat \mu_\rho\|^2_1 + \delta \|\mu_\eta - \hat \mu_\eta\|_0^2 \\
&+ C(\delta) \left( \|r_1\|_{-1}^2 + \|r_2\|_1^2 + \|r_3\|_0^2 + \|r_4\|_0^2 \right).
\end{align*}
The parameter $\delta$ stems from Young's inequalities and is still at our disposal. 

\subsection*{Step~3.}
Using \eqref{eq:2} and \eqref{eq:p2}, the mean value of $\mu_\rho - \hat \mu_\rho$ can be estimated by
\begin{align}
\la \mu_\rho-\hat\mu_\rho,1\ra^2 
&= \la\partial_\rho f(\rho,\eta) -\partial_\rho f(\hat\rho,\hat\eta) - r_2, 1\ra^2
 \leq C(f)\E_\alpha(\rho,\eta|\hat\rho,\hat\eta) + C\norm{r_2}_0^2, \label{eq:poincare_mean}
\end{align}
where we used the mean value theorem,  Sobolev embeddings, assumption (A3), and the first estimate of Lemma~\ref{lem:properties} in the second step. 
The estimate of Step~2, the second claim of Lemma~\ref{lem:properties} then lead to 
\begin{align*}
(i)+(ii)+(iii)+(iv) 
\leq -(1-(1&+C_D)\delta) \,  \D_{\rho,\eta}(\mu_\rho-\hat\mu_\rho,\mu_\eta-\hat\mu_\eta)+  c(f,\delta) \E_\alpha(\rho,\eta|\hat\rho,\hat\eta) \\
&  + C(f,\delta) \left(\norm{r_1}_{-1}^2 + \norm{r_2}_{1}^2  + \norm{r_3}_{0}^2 + \norm{r_4}_{0}^2\right).
\end{align*}
The parameter $\delta$ is still at our disposal and will be chosen later.

\subsection*{Step~4.}
By Taylor expansion, assumption (A3), and Hölder estimates, we deduce that
\begin{align*}
(v) + (vi)  
&\le C_f \, (1+\|\rho\|_{0,6} + \|\hat \rho\|_{0,6} + \|\eta\|_{0,6} + \|\hat \eta\|_{0,6}) \\
& \qquad \qquad \times (\|\rho - \hat \rho\|^2_{0,6} + \|\eta - \hat \eta\|_{0,6}^2) \, (\|\dt \hat \rho\|_0 + \|\dt \hat \eta\|_0).
\end{align*}
By Sobolev embeddings and the first claim of Lemma~\ref{lem:properties}, we arrive at
\begin{align*}
(v) + (vi)  
&\leq C_f' \, (\norm{\dt\hat\rho}_0+\norm{\dt\hat\eta}_0) \, \E_\alpha(\rho,\eta|\hat\rho,\hat\eta),
\end{align*}
with constant $C_f'$ additionally depending on the uniform bounds for $\rho,\eta,\hat\rho,\hat\eta$ in $L^\infty(H^1)$.

\subsection*{Step~5.}
With similar arguments as used in the previous steps, we can bound
\begin{align*}
(vii) + (viii) 
& \leq \delta\D_{\rho,\eta}(\mu_\rho-\hat\mu_\rho,\mu_\eta-\hat\mu_\eta) + C \, \E_\alpha(\rho,\eta|\hat\rho,\hat\eta) + \; C \left( \norm{r_1}_{-1}^2 + \norm{r_3}_{0}^2 \right).
\end{align*}
The constants $C$ in these estimates only depend on the bounds for the functions $f$ and $\LL$.

\subsection*{Step~6.}
Combination of the previous estimates and choosing $\delta=1/(2+C_D)$ leads to 
\begin{align*}
&\ddt \E_\alpha(\rho,\eta|\hat \rho,\hat \eta)  +\tfrac{1}{2} \D_{\rho,\eta}(\mu_\rho - \hat \mu_\rho,\mu_\eta - \hat \mu_\eta) \\
&\qquad \le C_1' (1 + \|\dt \hat \rho\|_0 + \|\dt \hat \eta\|_0) \, \E_\alpha(\rho,\eta|\hat \rho,\hat \eta) + C_2' \left(\norm{r_1}_{-1}^2 + \norm{r_2}_{1}^2  + \norm{r_3}_{0}^2 + \norm{r_4}_{0}^2\right),
\end{align*}
with constants $C_1',C_2'$ depending only on the 
bounds for the coefficients and uniform bounds for the phase-field components in $L^\infty(H^1)$, which are available.  
Integration in time and an application of Grönwall's lemma further yields
\begin{align*}
&\E_\alpha(\rho,\eta|\hat\rho,\hat\eta)(t) + \int_0^t \tfrac{1}{2} \D_{\rho,\eta}(\mu_\rho-\hat\mu_\rho,\mu_\eta-\hat\mu_\eta) \, ds \\
& \qquad \leq C_3'(t) \E_\alpha(\rho,\eta|\hat\rho,\hat\eta)(0) + C_4'(t) \int_0^t \norm{r_1}_{-1}^2 + \norm{r_2}_{1}^2  + \norm{r_3}_{0}^2 + \norm{r_4}_{0}^2 \, ds,
\end{align*}
with $C_3(t)$, $C_4(t)$ depending on $\int_0^t \|\dt \hat \rho\|_0 + \|\dt \hat \eta\|_0 \, ds$ and the bounds for the coefficients and solutions in the form stated above. This concludes the proof of Theorem~\ref{thm:stab}. 
\qed

\section{Proof of Theorem~\ref{thm:uniqueness}}
\label{sec:uniqueness}

By assumptions of the theorem, we denote by $(\rho,\eta,\mu_\rho,\mu_\eta)$ and $(\hat \rho,\hat \eta, \hat \mu_\rho,\hat \mu_\eta)$ two dissipative weak solutions of \eqref{eq:s1}--\eqref{eq:s2}, and additionally assume that 
\begin{align*}
 \hat \rho\in W^{1,1}(0,T;L^2(\Omega)) \qquad \text{and} \qquad \nabla \hat \mu_\rho, \hat \mu_\eta \in L^2(0,T;L^\infty(\Omega)).    
\end{align*}
Thus solution denoted by hat symbols thus has additional regularity. 
We will use Theorem~\ref{thm:stab} to proof the claims of the corollary. 
Let us start with a simple observation.
\begin{lemma}
The function $(\hat \rho,\hat \eta,\hat \mu_\rho,\hat \mu_\eta)$ satisfies \eqref{eq:p1}--\eqref{eq:p4} with residuals $r_2=r_4=0$ and
\begin{align*}
\la r_1,v_1 \ra &= \la (\LL_{11} - \Lh_{11})\nabla\hat\mu_\rho,\nabla v_1 \ra + \la \hat\mu_\eta (\LL_{12} - \Lh_{12}),\nabla v_1 \ra, \\
\la r_3,w_1 \ra &= \la (\LL_{12} - \Lh_{12})\nabla\hat\mu_\rho,w_1 \ra + \la \hat\mu_\eta(\LL_{22} - \Lh_{22}), w_1 \ra,
\end{align*}
where $\LL$, $\Lh$ result from evaluating the mobility $\LL(\cdot)$ for $(\rho,\eta)$ and $(\hat\rho,\hat\eta)$, respectively.
\end{lemma}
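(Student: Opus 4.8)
The plan is to verify directly that the pair $(\hat\rho,\hat\eta,\hat\mu_\rho,\hat\mu_\eta)$, being a genuine weak solution of \eqref{eq:s1}--\eqref{eq:s2} but with the mobility matrix evaluated at its own arguments $(\hat\rho,\nabla\hat\rho,\hat\eta,\nabla\hat\eta)$, produces exactly the stated residuals when plugged into the definitions \eqref{eq:p1}--\eqref{eq:p4}, in which the mobility is instead evaluated at $(\rho,\eta)$. The point is purely bookkeeping: the residuals $r_i$ measure the failure of $(\hat\rho,\ldots)$ to solve the system \emph{with the ``wrong'' mobility} $\LL=\LL(\rho,\nabla\rho,\eta,\nabla\eta)$, and since $(\hat\rho,\ldots)$ does solve it with $\Lh=\LL(\hat\rho,\nabla\hat\rho,\hat\eta,\nabla\hat\eta)$, the discrepancy is governed entirely by $\LL-\Lh$.

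Concretely, first I would observe that \eqref{eq:p2} and \eqref{eq:p4} do not involve the mobility at all: they are precisely \eqref{eq:2} and \eqref{eq:4} written for $(\hat\rho,\hat\eta,\hat\mu_\rho,\hat\mu_\eta)$. Since the latter is assumed to be a dissipative weak solution, those identities hold with zero right-hand side, hence $r_2=r_4=0$. Next, for $r_1$, I would start from the definition $\la r_1,v_1\ra = \la\dt\hat\rho,v_1\ra + \la\LL_{11}\nabla\hat\mu_\rho,\nabla v_1\ra + \la\hat\mu_\eta\LL_{12},\nabla v_1\ra$, then use that $(\hat\rho,\ldots)$ satisfies \eqref{eq:1} with its own mobility, i.e. $\la\dt\hat\rho,v_1\ra + \la\Lh_{11}\nabla\hat\mu_\rho,\nabla v_1\ra + \la\hat\mu_\eta\Lh_{12},\nabla v_1\ra = 0$. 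Subtracting, the $\dt\hat\rho$ term cancels and one is left with $\la r_1,v_1\ra = \la(\LL_{11}-\Lh_{11})\nabla\hat\mu_\rho,\nabla v_1\ra + \la\hat\mu_\eta(\LL_{12}-\Lh_{12}),\nabla v_1\ra$. The computation for $r_3$ is identical, starting from \eqref{eq:p3} and subtracting \eqref{eq:3} for $(\hat\rho,\ldots)$.

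The only genuine subtlety is to check that the expressions on the right-hand sides are well-defined, i.e. that the dualities make sense for $v_1,w_1$ in the required test spaces. For $r_1$: $\LL_{11}-\Lh_{11}$ is bounded in $L^\infty$ by (A2), $\nabla\hat\mu_\rho \in L^2(L^\infty)$ and $\hat\mu_\eta \in L^2(L^\infty)$ by the extra regularity hypothesis of Theorem~\ref{thm:uniqueness}, so both products lie in $L^2(0,T;L^2(\Omega))$ and pair against $\nabla v_1 \in L^2(\Omega)$; hence $r_1 \in L^2(0,T;H^{-1}(\Omega))$. For $r_3$ the same bounds show the products lie in $L^2(0,T;L^2(\Omega))$, so $r_3$ pairs against $w_1 \in L^2(\Omega)$ and $r_3 \in L^2(0,T;L^2(\Omega))$. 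I anticipate no real obstacle here — the main thing to be careful about is simply tracking which mobility (barred vs.\ unbarred) appears in which identity, since the notation $\LL$ versus $\Lh$ is the whole content of the lemma. This sets up the next step of the uniqueness proof, where these residuals will be estimated in terms of $\E_\alpha(\rho,\eta|\hat\rho,\hat\eta)$ using Lipschitz continuity of $\LL(\cdot)$ and absorbed via Grönwall.
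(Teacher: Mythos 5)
Your proposal is correct and follows exactly the paper's argument: the paper disposes of this lemma in one line ("the claim follows immediately by subtracting the weak forms characterizing the two weak solutions"), which is precisely your computation — $r_2=r_4=0$ because \eqref{eq:p2}, \eqref{eq:p4} contain no mobility, and $r_1,r_3$ arise from cancelling $\dt\hat\rho$, $\dt\hat\eta$ and collecting the $\LL-\Lh$ differences. Your additional check that the residuals are well-defined under the regularity hypotheses of Theorem~\ref{thm:uniqueness} is a sensible supplement but not a departure from the paper's route.
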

The claim follows immediately by subtracting the weak forms characterizing the two weak solutions. 
In the norms given in Theorem~\ref{thm:stab}, the residuals can be bounded by
\begin{align*}
 \norm{r_1}_{-1} &\leq \norm{(\LL_{11} - \Lh_{11})\nabla\hat\mu_\rho}_0 +  \norm{(\LL_{12} - \Lh_{12})\hat\mu_\eta}_0,\\
 \norm{r_3}_{0} &\leq \norm{(\LL_{12} - \Lh_{12})\nabla\hat\mu_\rho}_0 +  \norm{(\LL_{22} - \Lh_{22})\hat\mu_\eta}_0
\end{align*}
For further estimation of the individual terms, we consider the expansions
\begin{align*}
\LL -\Lh &= \LL(\rho,\nabla\rho,\eta,\nabla\eta) \mp \LL(\hat\rho,\nabla\rho,\eta,\nabla\eta)  \mp  \LL(\hat\rho,\nabla\hat\rho,\eta,\nabla\eta)\\
&\qquad \qquad \mp \LL(\hat\rho,\hat\nabla\rho,\hat\eta,\nabla\eta)  - \LL(\hat\rho,\nabla\hat\rho,\hat\eta,\nabla\hat\eta), 
\end{align*}
and by triangle inequalities and Taylor estimates, we find
\begin{align*}
|\LL -\Lh| 
&\leq C(\LL')(|\rho-\hat\rho| + |\nabla(\rho-\hat\rho)| + |\eta-\hat\eta| + |\nabla(\eta-\hat\eta)|)
\end{align*}
This allows us to estimate the two non-trivial residuals by 
\begin{align}
 \norm{r_1}_{-1}^2 &\leq \norm{\LL_{11} - \Lh_{11}}_0^2 \, \norm{\nabla\hat\mu_\rho}_{0,\infty}^2 +  \norm{\LL_{12} - \Lh_{12}}_0^2 \, \norm{\hat\mu_\eta}_{0,\infty}^2 \notag \\
&\leq C_L' \left(\norm{\nabla\hat\mu_\rho}_{0,\infty}^2 + \norm{\hat\mu_\eta}_{0,\infty}^2\right) \, \E_\alpha(\rho,\eta|\hat\rho,\hat\eta) \label{eq:resi_est1} \\
\intertext{and} 
 \norm{r_3}_{0}^2 &\leq \norm{\LL_{12} - \Lh_{12}}_0^2 \, \norm{\nabla\hat\mu_\rho}_{0,\infty}^2 +  \norm{\LL_{22} - \Lh_{22}}_0^2 \, \norm{\hat\mu_\eta}_{0,\infty}^2 \notag \\
 &\leq C_L' \left(\norm{\nabla\hat\mu_\rho}_{0,\infty}^2 + \norm{\hat\mu_\eta}_{0,\infty}^2\right) \, \E_\alpha(\rho,\eta|\hat\rho,\hat\eta) \label{eq:resi_est3}
\end{align}
From the nonlinear stability estimate of Theorem~\ref{thm:stab} and the bounds \eqref{eq:resi_est1}--\eqref{eq:resi_est3}, we see that 
\begin{align*}
\E_\alpha(\rho,\eta|\hat\rho,\hat\eta)(t) &+ \int_0^t \D_{\rho,\eta}(\mu_\rho-\hat\mu_\rho,\mu_\eta-\hat\mu_\eta) ds \\
&\leq C_1 \E_\alpha(\rho,\eta|\hat\rho,\hat\eta)(0) + C_2 \int_0^t C_3(s) \, \E_\alpha(\rho,\eta|\hat \rho,\hat \eta)(s) \, ds,
\end{align*}
with $C_3(\cdot)=2 C_L' \left(\norm{\nabla\hat\mu_\rho}_{0,\infty}^2  + \norm{\hat\mu_\eta}_{0,\infty}^2\right)$.  
The first assertion of Theorem~\ref{thm:uniqueness} now follows by a Gronwall inequality, and the second follows immediately from the first.
\qed

\section{Proof of Theorem~\ref{thm:perturbation}}
\label{sec:perturbation}
With similar reasoning as employed in the previous section, we can see that $(\hat \rho,\hat \mu_\rho,\hat \eta,\hat \mu_\eta)$ satisfies \eqref{eq:p1}--\eqref{eq:p4} with residuals $r_2=r_4=0$ and
\begin{align*}
\la r_1,v_1 \ra &= \la (\LL_{11} - \tilde\LL_{11})\nabla\hat\mu_\rho,\nabla v_1 \ra + \la \hat\mu_\eta (\LL_{12} - \tilde\LL_{12}),\nabla v_1 \ra, \\
                &= \la (\LL_{11} \mp \Lh_{11} - \tilde\LL_{11})\nabla\hat\mu_\rho,\nabla v_1 \ra + \la \hat\mu_\eta (\LL_{12} \mp \Lh_{12} - \tilde\LL_{12}),\nabla v_1 \ra\\
\la r_3,w_1 \ra &= \la (\LL_{12} - \tilde\LL_{12})\nabla\hat\mu_\rho,w_1 \ra + \la \hat\mu_\eta(\LL_{22}- \tilde\LL_{22}), w_1 \ra\\
&=\la (\LL_{12} \mp \Lh_{12}- \tilde\LL_{12})\nabla\hat\mu_\rho,w_1 \ra + \la \hat\mu_\eta(\LL_{22} \mp \Lh_{22}- \tilde\LL_{22}), w_1 \ra. 
\end{align*}
Recall that $\Lh$ results from the evaluation of the mobility $\LL(\cdot)$ for $(\hat\rho,\hat\eta)$, while $\tilde\LL$ results from the evaluation of the perturbed mobility $\tilde\LL(\cdot)$ for $(\hat\rho,\hat\eta)$.
By elementary computations and using the assumptions and previous results, one can deduce the bounds
\begin{align*}
 \norm{r_1}_{-1}^2 
 &\leq C_L \, \left(\norm{\nabla\hat\mu_\rho}_{0,\infty}^2 + \norm{\hat\mu_\eta}_{0,\infty}^2\right) \, \E_\alpha(\rho,\eta|\hat\rho,\hat\eta) + \varepsilon \, \left(\norm{\nabla\hat\mu_\rho}_{0,\infty}^2 + \norm{\hat\mu_\eta}_{0,\infty}^2\right),\notag \\
 \intertext{and}
 \norm{r_3}_{0}^2
 &\leq C_L \, \left(\norm{\nabla\hat\mu_\rho}_{0,\infty}^2 + \norm{\hat\mu_\eta}_{0,\infty}^2\right) \, \E_\alpha(\rho,\eta|\hat\rho,\hat\eta) + \varepsilon \, \left(\norm{\nabla\hat\mu_\rho}_{0,\infty}^2 + \norm{\hat\mu_\eta}_{0,\infty}^2\right),
\end{align*}
where we used the bounds \eqref{eq:measure_error} on the perturbations of the mobility functions. 
Using the abstract stability result of Theorem~\ref{thm:stab}, the above estimates for the residuals, and a Gronwall inequality then immediately yield the assertion of the theorem. 
\qed

\section{Numerical illustration}
\label{sec:num}


For illustration of our theoretical results, i.p. the stability w.r.t. perturbations in the mobility matrix, we now present some numerical tests. 
\subsection*{Problem setup}
As the spatial domain, we consider a periodic unit cell $\Omega=(0,1)^2$.
For the mobility and potential functions, we use the choices introduced in Example~\ref{ex:parameters} with $C=1$, $D=0.062$, $a=1$ and $d=1000$, while $b=c^2$ and we compare $c=0$ and $c=\sqrt{100}$. The interface parameters are set to $\gamma_\rho=\gamma_\eta=10^{-4}$. 
The steady state chemical potentials of the coupled system \eqref{eq:s1}--\eqref{eq:s2} are characterized by $\mu_{\rho}^\infty=\text{const.}$ and $\mu_\eta^\infty=0$. 
Together with the observation that $\partial_\rho f, \partial_\eta f$ do not depend on $\eta,\rho$, respectively, one can deduce that the equations determining the steady states of the phase-field variables $\rho^\infty$ and $\eta^\infty$ are independent of the parameters $a,b,c,d$.
Due to non-convexity of the free energy functional, the solutions of the steady state systems are, however, not unique and can therefore be expected to depend on the evolution, and hence also on the mobility parameter $a,b,c,d$.


\subsection*{Numerical tests}
As initial values for our computations, we choose 
\begin{align*}
  \rho_0(x,y) = \tfrac{1}{2} + \tfrac{1}{2}\sin(4\pi x)\sin(2\pi y), \quad \eta_0 = \rho_0(x,y)\chi(x\leq 0.463).
\end{align*}
For discretization we utilize a standard finite element approximation combined with a stabilized semi-implicit time-stepping scheme; see \cite{shen2010} for similar methods applied to the Cahn-Hilliard or Allen-Cahn equation. Furthermore, we validated the solutions on refined discretisations and by an Petrov-Galerkin scheme in the spirit of \cite{brunkp}.
%

\subsection*{Results}

In Figure \ref{fig:results} we display some snapshots for simulations of our test problem with $b=100$ and $b=0$.  
\begin{figure}[htbp!]
\label{fig:results}
\centering
\footnotesize
\begin{tabular}{cccc}
     \includegraphics[trim={7.5cm 4.4cm 9.5cm 4.5cm},clip,scale=0.12]{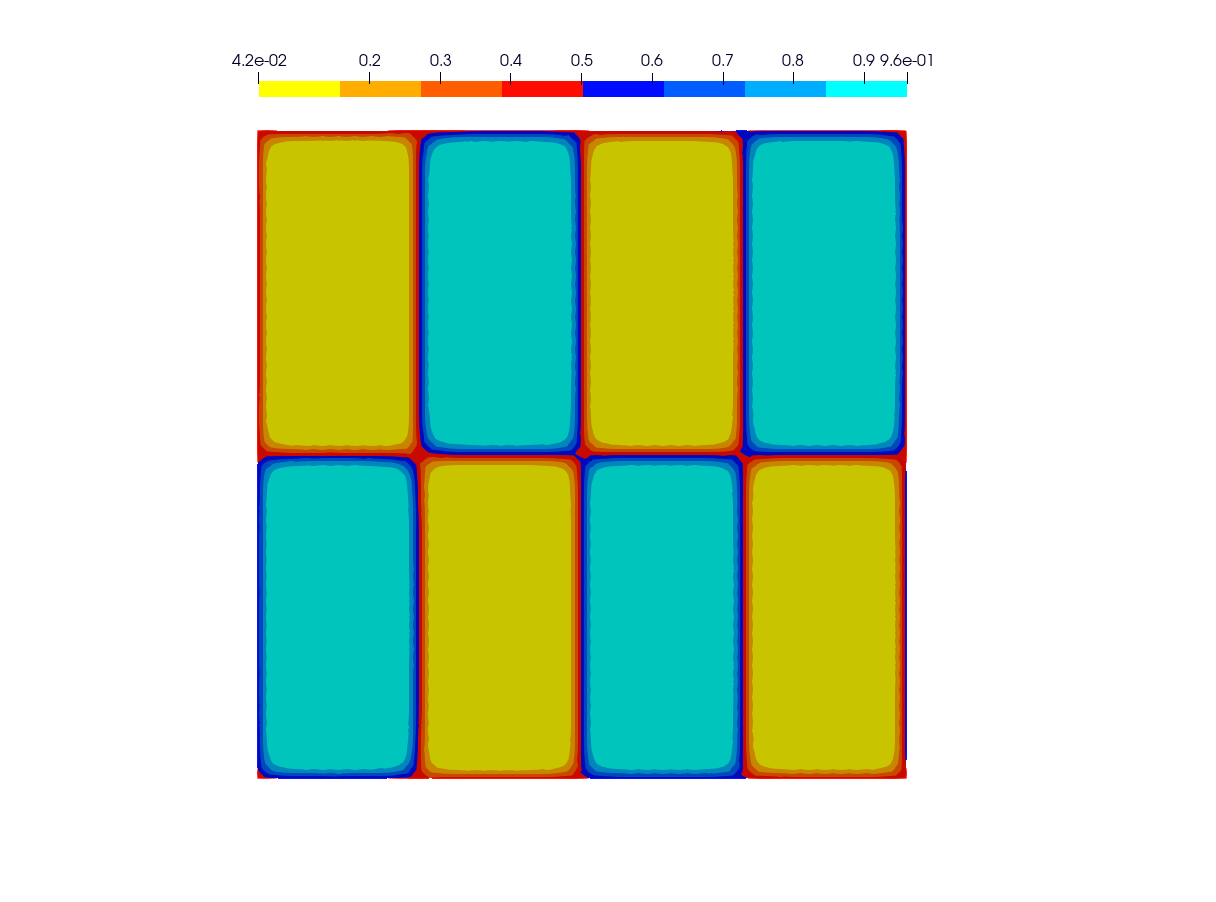} 
    &\includegraphics[trim={7.5cm 4.4cm 9.5cm 4.5cm},clip,scale=0.12]{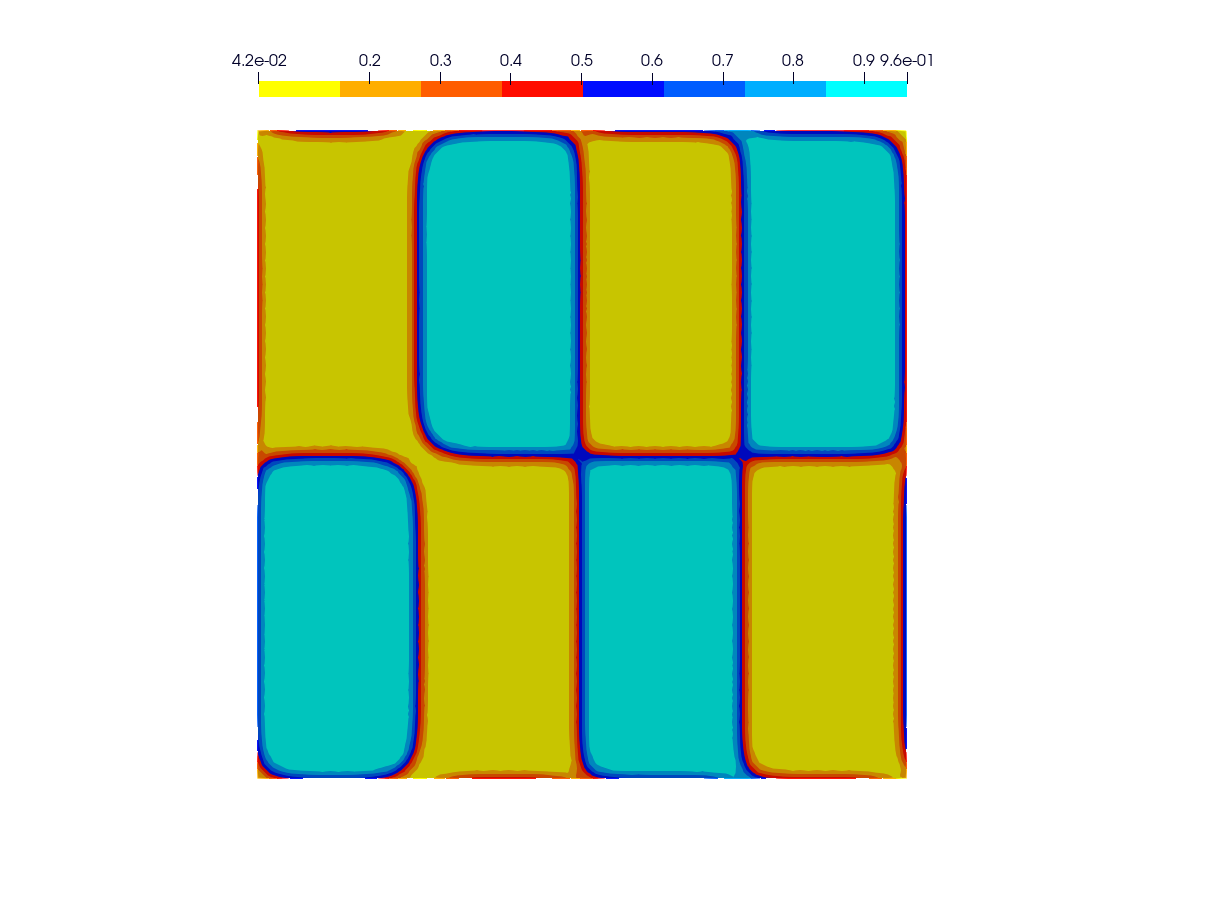} 
    &\includegraphics[trim={7.5cm 4.4cm 9.5cm 4.5cm},clip,scale=0.12]{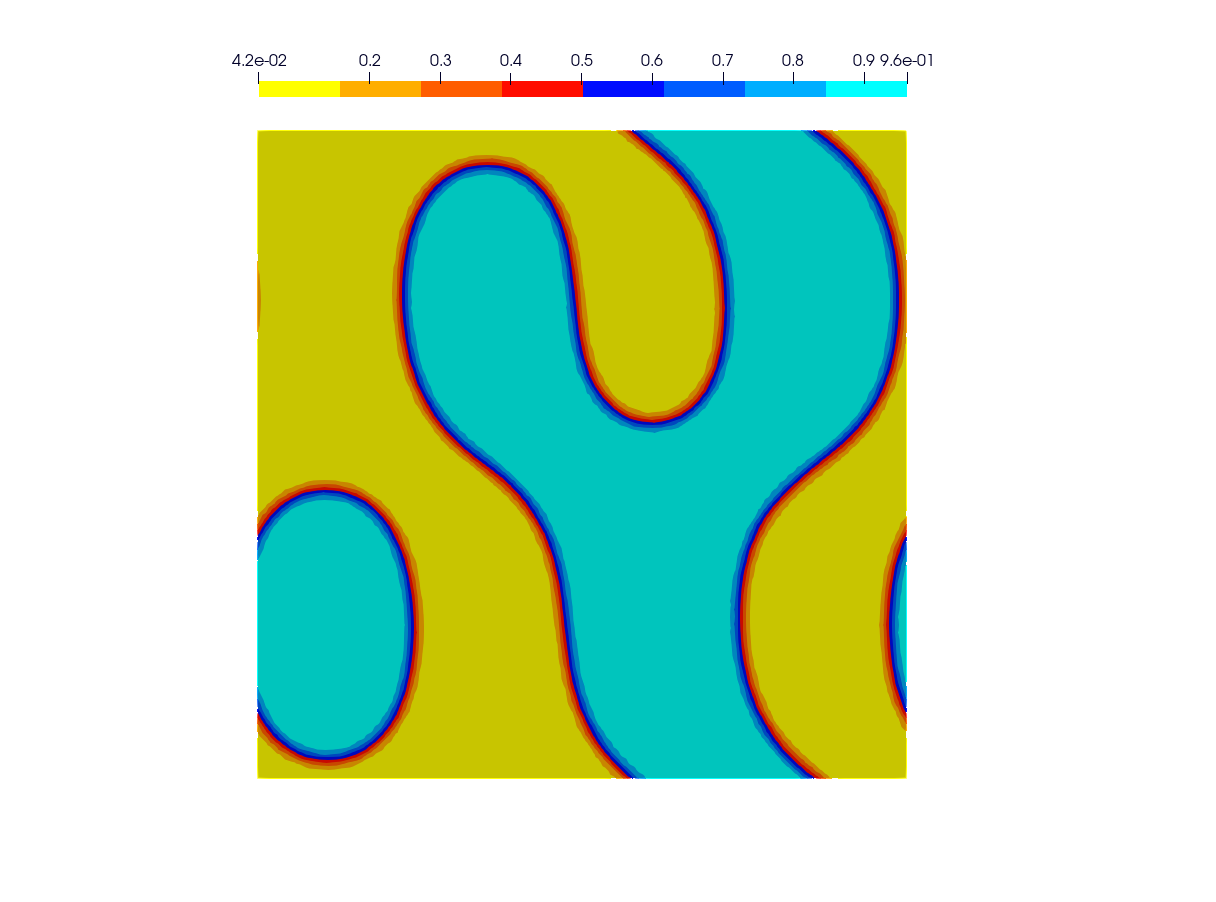} 
    &\includegraphics[trim={7.5cm 4.4cm 9.5cm 4.5cm},clip,scale=0.12]{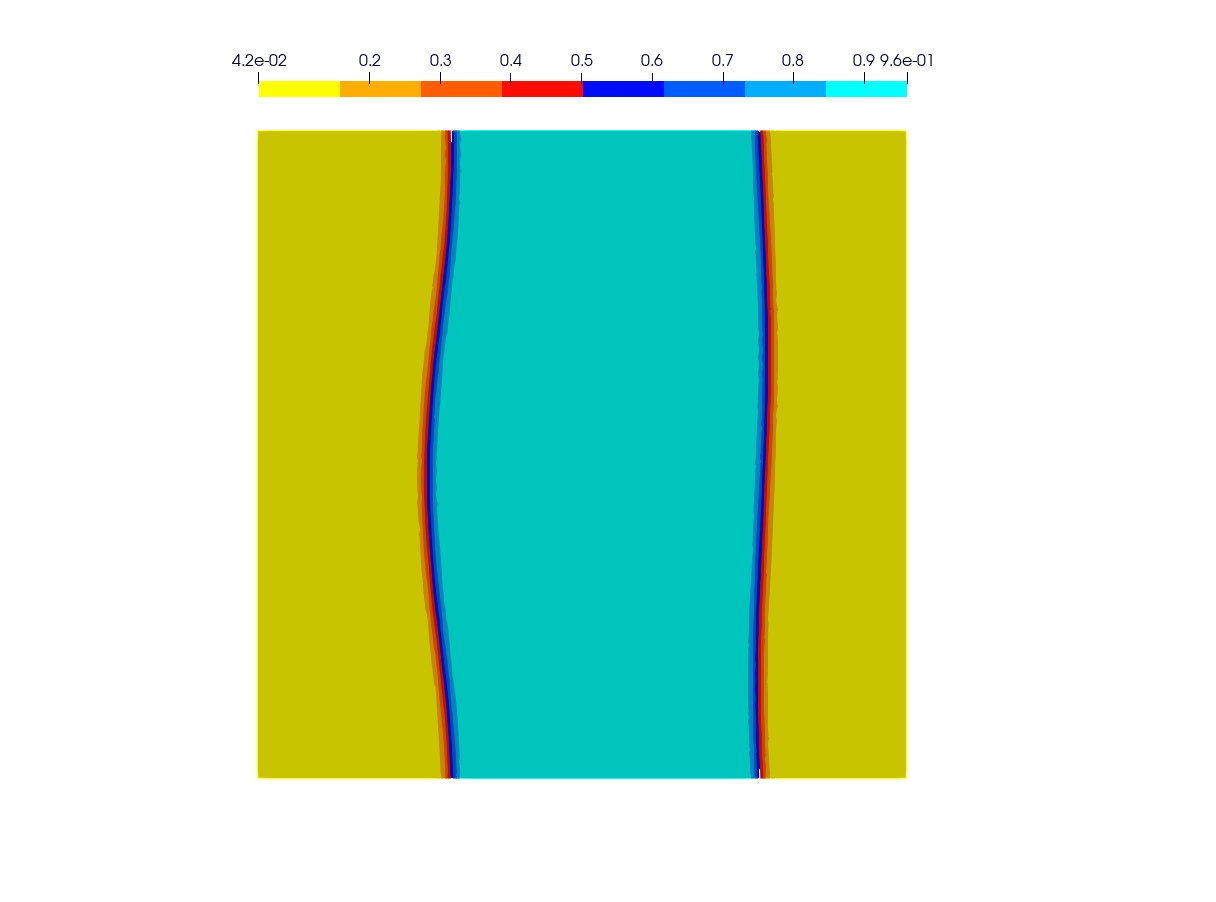} \\[1em]
    \hspace{-1em} \includegraphics[trim={7.5cm 4.4cm 9.5cm 4.5cm},clip,scale=0.123]{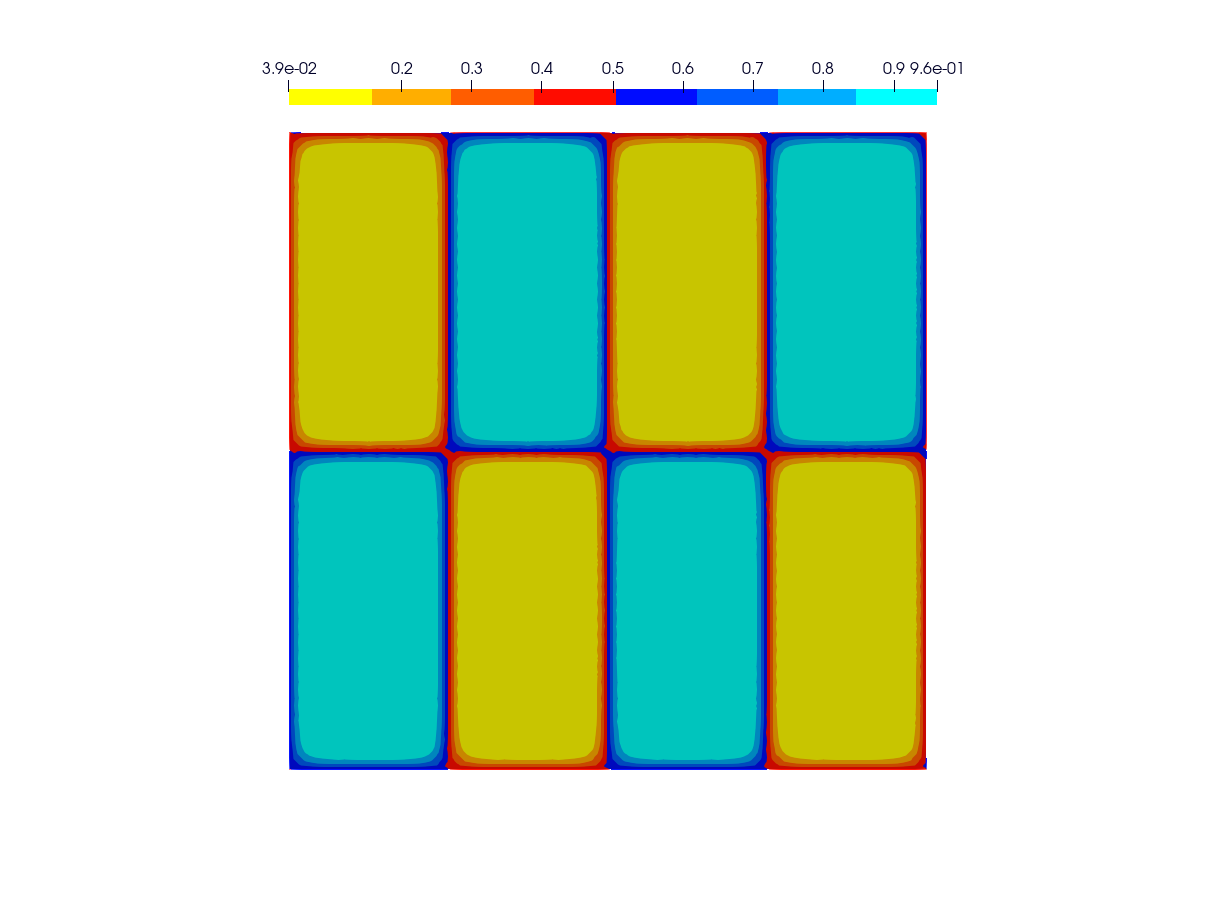} 
    &\hspace{-0.75em}\includegraphics[trim={7.5cm 4.4cm 9.5cm 4.5cm},clip,scale=0.123]{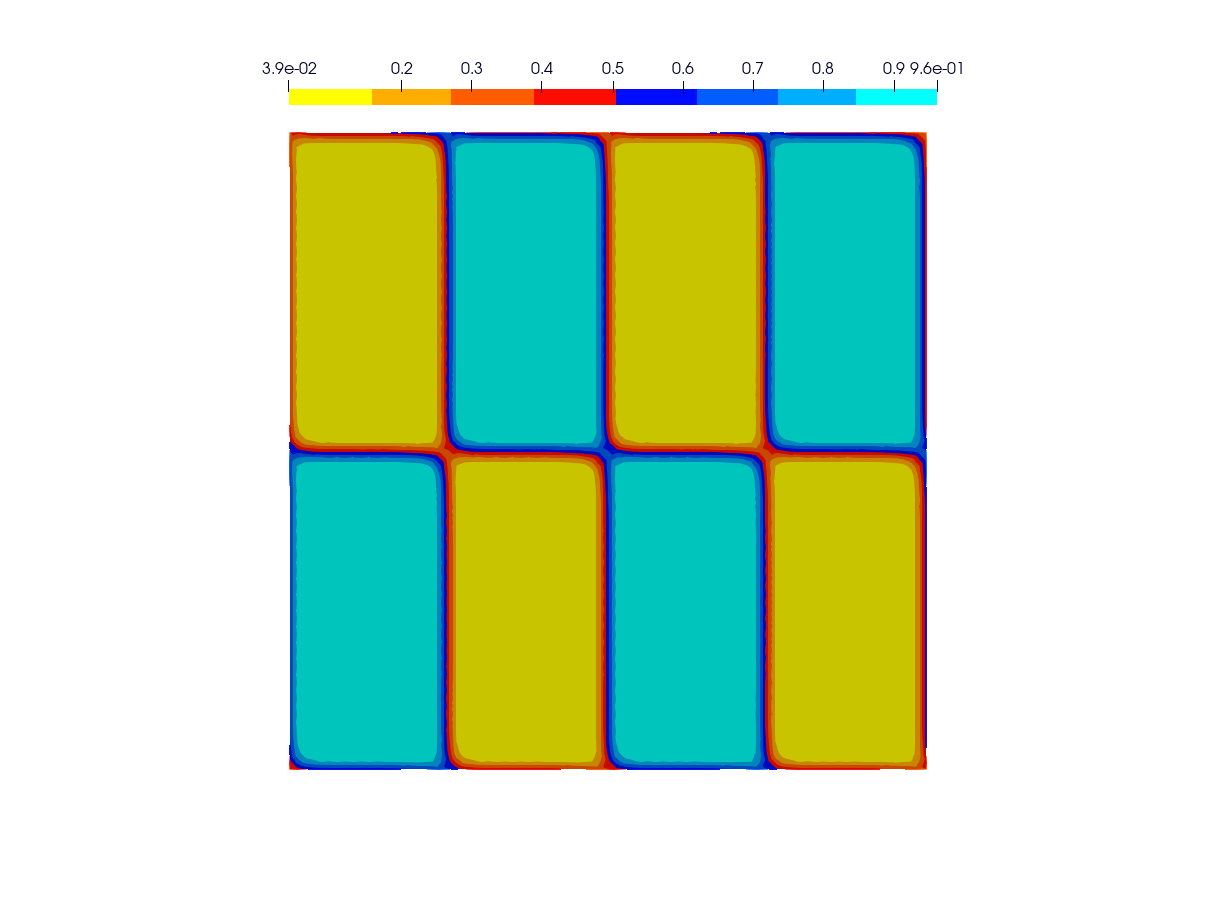} 
    &\hspace{-0.75em}\includegraphics[trim={7.5cm 4.4cm 9.5cm 4.5cm},clip,scale=0.123]{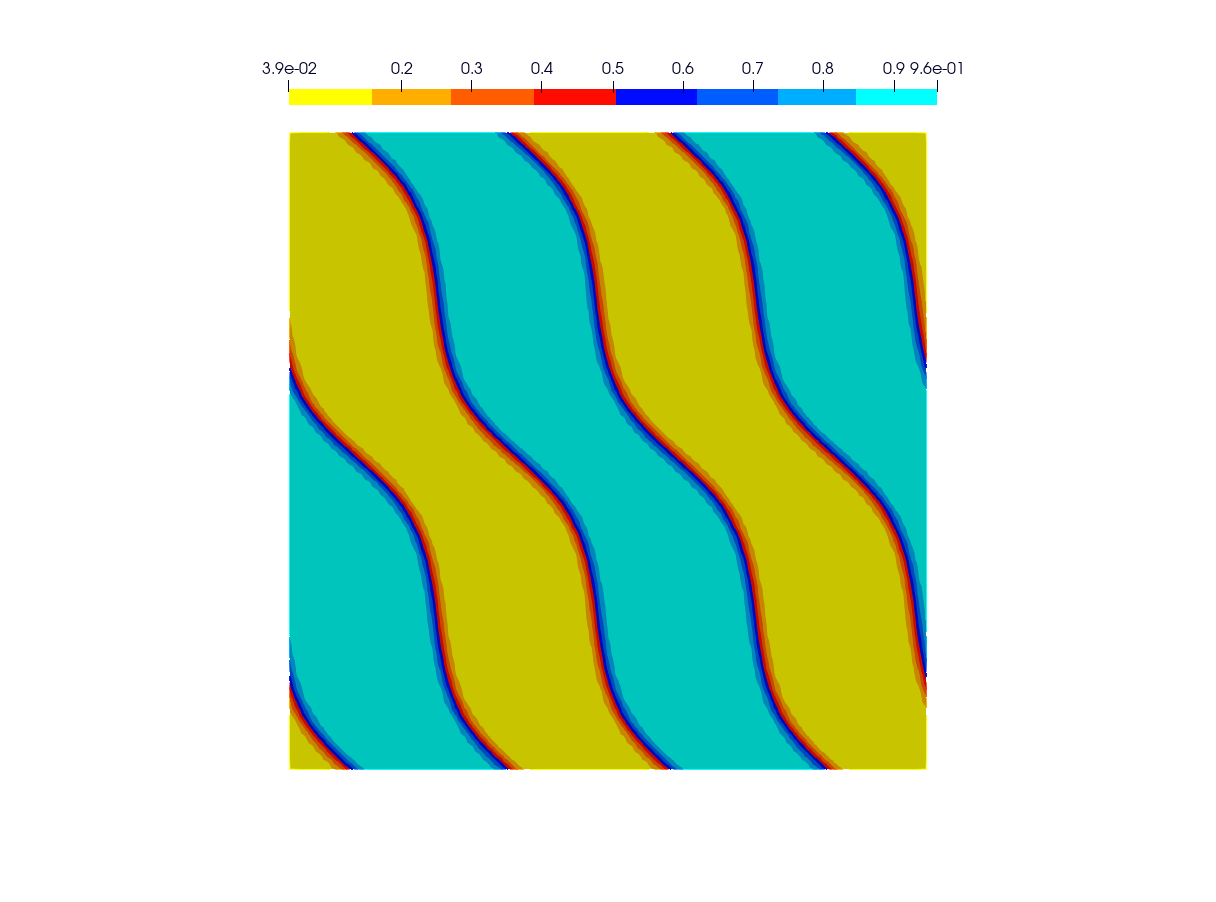} 
    &\hspace{-0.75em}\includegraphics[trim={7.5cm 4.4cm 9.5cm 4.5cm},clip,scale=0.123]{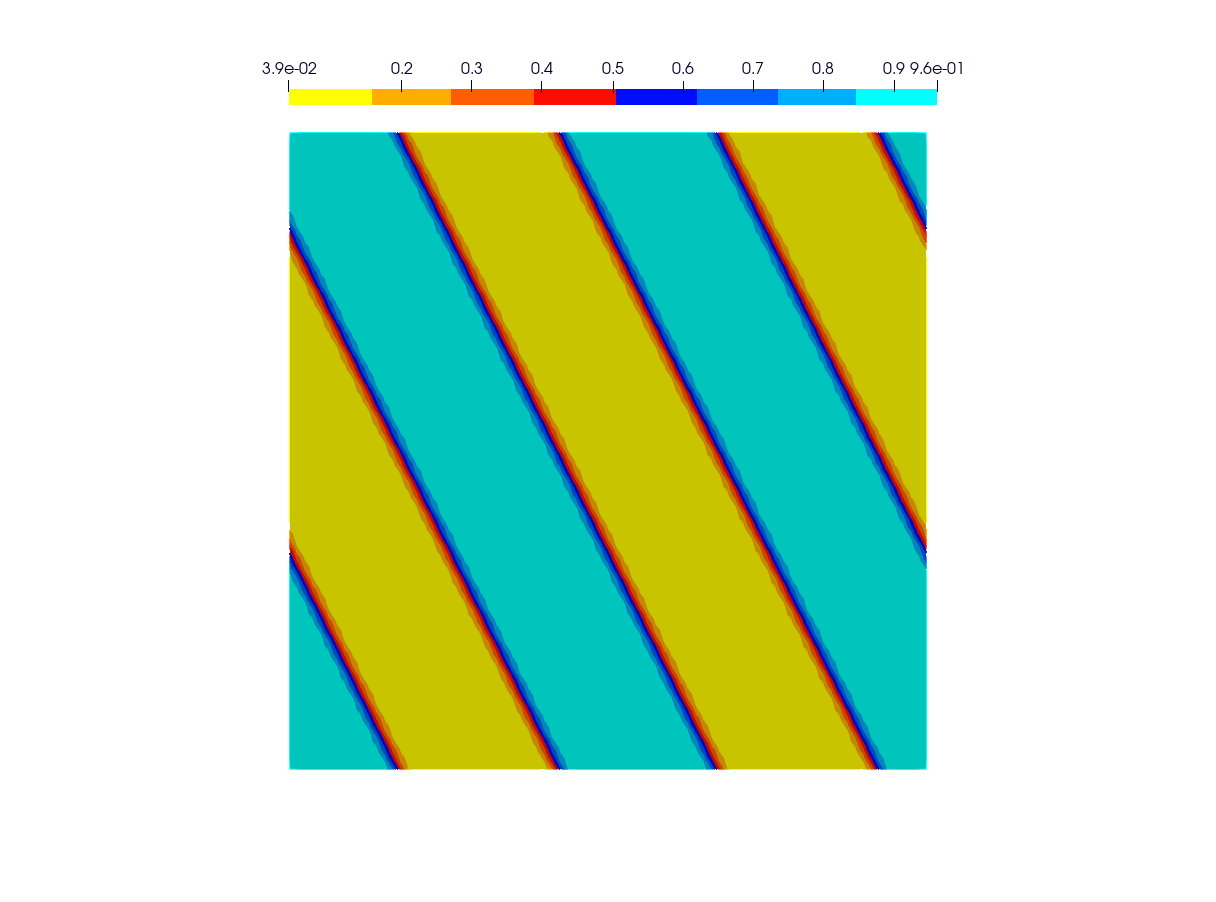} \\[-0.0em]
    t=0.1  & t=0.3 & t=2.2 & t=20\\
\end{tabular}
    \caption{Snapshots of the volume fraction $\rho$ for model with cross-kinetic coupling ($b=100$, top) and without cross-kinetic coupling ($b=0$, bottom).}
\end{figure}
In perfect agreement with our theoretical results, which predict stability w.r.t. perturbations in the model parameters but with constants growing in time, we observe very similar solutions for the two test cases for small times, but growing deviations for larger times.
To ensure that the results are independent of the discretization parameters, the tests were carried out for different choices of the mesh size $h$ and the time step $\tau$, leading to identical observations.
For time $t=20$, the two solutions for $b=0$ and $b=100$ became almost stationary but rather different, despite the fact that the steady states for the two cases are described by the same set of equations; see the discussion above. Note that for $b=0$ the strip pattern represents a quasi-stationary state of the Cahn-Hilliard equation, since both are in this uncoupled  the further evolution is prohibited. In the coupled case the additional driving force from the cross-coupling drives the Cahn-Hilliard part out of the quasi-stationary state as long as $\LL_{12}\mu_\eta$ is non-zero. By construction with the normal vector this effect is mostly localised at the interface, i.e. in this case prohibits such a quasi-stationary state. This is in perfect agreement with the interpretation of the kinetic cross-coupling term in the literature as \emph{anti-trapping term} \cite{Oyedeji2022}, i.e. an additional force which prevents trapping in quasi-stationary states.

\section{Discussion}

In this work, we studied Cahn-Hilliard/Allen-Cahn systems with non-diagonal and gradient dependent mobilities. Existence of global-in-time weak solutions was established using a-priori estimates in strong norms. Based on relative energy estimates, a nonlinear stability analysis was developed, which allowed us to prove a weak-strong uniqueness principle as well as stability estimates w.r.t perturbations in the model parameters. 
While demonstrated here for the Cahn-Hilliard/Allen-Cahn equation with cross-kinetic coupling, the basic arguments should be applicable also to more complex models, including non-isothermal extensions and the incorporation of fluid flow; see e.g. \cite{Abels2013,MARVEGGIO2021924,Francesso}.
{\footnotesize

\section*{Acknowledgement}
\begin{itemize}
    \item The authors would like to thankfully acknowledge the comments and suggestions by the two anonymous
reviewers to improve the quality of the manuscript.
\item Support by the German Science Foundation (DFG) via TRR~146 (project~C3) and SPP~2256 (project Eg-331/2-1) is gratefully acknowledged.
\end{itemize}}

\bibliographystyle{abbrv}	
\bibliography{mathconsnoncons.bib}

\end{document}